\def\R{\mathbb{R}}
\def\Z{\mathbb{Z}}
\def\esf{\mathbb{S}}
\newcommand{\wt}{\widetilde}
\def\t{{\theta}}
\def\g{{\gamma}}
\newtheorem{theorem}{Theorem}
\newtheorem{proposition}[theorem]{Proposition}
\newtheorem{remark}[theorem]{Remark}
\theoremstyle{definition}
\newtheorem{definition}[theorem]{Definition}
\begin{document}
 
\begin{title}
	{Vertically invariant minimal surfaces in unimodular semidirect products}
\end{title}

\begin{author}
{David Moya \footnote{This work is supported in part by the IMAG–Maria de Maeztu grant CEX2020-001105-M / AEI /10.13039/501100011033, MICINN grant PID2020-117868GB-I00 and Junta de Andalucía grant A-FQM-139-UGR18.}}
\end{author}
\date{}

\newcommand{\Addresses}{{
  \bigskip
  \footnotesize

  \textsc{David Moya, Department of Geometry and Topology  \& IMAG, University of Granada, 18001 Granada, Spain}\par\nopagebreak
  \textit{E-mail address:} \texttt{dmoya@ugr.es}

}}

\maketitle

\begin{abstract}
A surface in a three-dimensional metric Lie group $G$ is said invariant if it is invariant with respect to a one-dimensional subgroup $\Gamma$ of the isometry group of $G$. Is this work we focus on unimodular metric Lie groups $G$ that can be written as a semidirect product of the form $\R^2\rtimes_A \R$ for certain matrix $A\in \mathcal{M}_2(\R)$ and study the minimal surfaces which are invariant under the group $\Gamma$ generated by left translations by elements in the vertical axis $\{0\}\rtimes\R$. We will call these surfaces vertically invariant. In particular, we describe new examples of minimal surfaces in $\widetilde{E}(2)$ which are vertically invariant.
\end{abstract}

\section{Introduction}	
Constant mean curvature (CMC) surfaces (including the minimal case) in homogeneous spaces is a field of interest in recent years, specially after the generalization of the holomorphic Hopf differential by Abresch and Rosenberg \cite{AbreschRosenberg} for CMC surfaces in homogeneous spaces with isometry group of dimension 4, the so-called $\mathbb{E}(\kappa,\tau)$ spaces. A non-holomorphic Hopf-type quadratic differential for CMC surfaces in three-dimensional metric Lie groups has also been developed in \cite{MeeksPerez}.

Among the family of CMC surfaces in a three-dimensional Riemannian manifold, it is natural to study and describe the surfaces that satisfy a further geometric extrinsic condition. For instance, if the ambient space $G$ is globally endowed with coordinates $(x,y,z)\in G$, we can look for surfaces given by a graph $z(x,y)=f(x)+g(y)$, where $f$, $g$ are smooth functions on some interval of $\R$ (they are called translation surfaces), as it is done in the works \cite{lop3} and \cite{Liu}; we can also impose the condition of being rotationally symmetric, as for example in \cite{Torralvo}; or, as we do in this study, we can assume that the ambient space is a metric Lie group and our surface is invariant under a one-parameter group of left translations of this metric Lie group, see \cite{loraf1},\cite{loraf2},\cite{stefano}. It is also natural to impose these geometric conditions together with constant Gaussian curvature, see \cite{torralbo2}. Imposing these conditions usually leads to specific examples of surfaces with the desired properties.

The background spaces of this work are metric Lie groups. We will call metric Lie group to a simply connected 3-dimensional Lie group endowed with a left invariant metric. On a Lie group $G$ there is a measure invariant under left translations which is called the Haar measure. When the Haar measure is also invariant under right translations, the Lie group $G$ is called unimodular. See \cite{MeeksPerez} for a complete description of unimodular and non-unimodular metric Lie groups. A particularly interesting algebraic condition on metric Lie groups is the property of being written as a semidirect product. Semidirect products comprise all the non-unimodular metric Lie groups and among the unimodular family there are only two cases that can not be written as semidirect products: the special unitary group $SU(2)$ and the universal cover $\widetilde{SL}(2,\R)$ of the special linear group.

We organise this work as follows. In Section \ref{sec2} we give some preliminaries and introduce notation involving metric Lie groups, focusing on the properties we need to develop the remaining sections. In Section \ref{sec3} we get an ODE for the horizontal curve which generates, through vertical translations, an invariant CMC surface. In Section \ref{sec4} we particularize this equation for CMC surfaces when the metric Lie group is unimodular and can be written as a semidirect product. Finally, in Sections \ref{sec5} and \ref{sec6} we describe the vertically invariant subclass of minimal surfaces in the Heisenberg group ${\rm Nil}_3$ and in the universal cover $\widetilde{E}(2)$ of the group of orientation-preserving rigid motions of the Euclidean plane. When the ambient space is $\R^3=\R^2\rtimes_A\R$ with $A=0\in \mathcal{M}_2(\R)$, it is well-known that vertical planes are the only vertically invariant minimal surfaces. When $G={\rm Sol}_3$, the vertically invariant minimal  surfaces were studied in \cite{loraf1} and \cite{loraf2}. From another point of view, the vertically invariant CMC surfaces of the Heisenberg group $\text{Nil}_3$ were classified in \cite{FMP}. In Section \ref{sec7}, we also describe vertically invariant surfaces with zero Gaussian curvature in these ambient spaces.

\section{Preliminaries on metric Lie groups}\label{sec2}
The results in this preliminary section are detailed in \cite{MeeksPerez}. For a matrix $A\in \mathcal{M}_2(\R)$ of the form 
\begin{equation*}
A=\left(\begin{array}{cc}
a & b \\
c & d
\end{array}\right),
\end{equation*}
where $a,b,c,d\in \R$, we can consider the semidirect product  $G=\R^2\rtimes_A \R$. 

Consider $(p_1,z_1),(p_2,z_2)\in \R^2\rtimes_A \R$, the operation $*$ of the semidirect product $G$ is given by:
\begin{equation}\label{6}
(p_1,z_1)*(p_2,z_2)=(p_1+e^{z_1A}p_2,z_1+z_2).    
\end{equation}

We choose global coordinates $(x,y)\in \R^2$, $z\in \R$ so that $\partial_x=\frac{\partial}{\partial x}, \partial_y=\frac{\partial}{\partial y}, \partial_z=\frac{\partial}{\partial z}$ generates the space of differentiable vector fields on $G$, $\mathfrak{X}(G)$, and denote
\begin{equation*}
e^{zA}=\left(\begin{array}{cc}
a_{11}(z) & a_{12}(z) \\
a_{21}(z) & a_{22}(z)
\end{array}\right). 
\end{equation*}
With this notation, the set $\{E_1,E_2,E_3\}$ given by:
\begin{equation}\label{3}
E_1(x,y,z)=a_{11}(z)\partial_x +a_{21}(z)\partial_y,\quad E_2(x,y,z)=a_{12}(z)\partial_x +a_{22}(z)\partial_y, \quad E_3=\partial_z
\end{equation}
is a basis of the Lie algebra $\mathfrak{g}$ of $G$. We consider the canonical left invariant metric $\langle,\rangle$ on $\R^2\rtimes_A \R$, which is the one that extends via left translations the usual inner product on $T_eG$, $e=(0,0,0)$ or equivalently for which the left invariant basis $\{E_1,E_2,E_3\}$ is orthonormal. $\langle,\rangle$ takes the following form when we consider the basis $\{\partial_x,\partial_y, \partial_z\}$:
\begin{eqnarray}
\langle ,\rangle  &=&[a_{11}(-z)^2 + a_{21}(-z)^2]\, dx^2 
+[a_{12}(-z)^2+a_{22}(-z)^2]\, dy^2 + dz^2\nonumber
\\
& & 
+[a_{11}(-z)a_{12}(-z) + a_{21}(-z)a_{22}(-z)] (dx\otimes dy 
+ dy\otimes dx).\label{3'}
\end{eqnarray}
The Levi-Civita connection $\nabla$ for the canonical metric of $G$ is determined by:
\begin{equation}\label{4}
\begin{array}{l|l|l}
\displaystyle \nabla_{E_1}E_1=a E_3  &  \displaystyle\nabla_{E_1}E_2=\frac{b+c}{2} E_3 & \displaystyle\nabla_{E_1}E_3=-aE_1 -\frac{b+c}{2} E_2\\
\displaystyle\nabla_{E_2}E_1=\frac{b+c}{2} E_3 & \displaystyle\nabla_{E_2}E_2=d E_3 & \displaystyle\nabla_{E_2}E_3=-\frac{b+c}{2} E_1-d E_2\\
\displaystyle\nabla_{E_3}E_1=\frac{c-b}{2} E_2 & \displaystyle\nabla_{E_3}E_2=\frac{b-c}{2} E_1 & \displaystyle\nabla_{E_3}E_3=0.
\end{array}  
\end{equation}
Moreover, we also need the following right invariant basis of vector fields of $G$ (in particular, these are killing vector fields):
\begin{equation}\label{5}
F_1=\partial_x,\quad F_2=\partial_y, \quad F_3(x,y,z)=(ax+by)\partial_x +(cx+dy)\partial_y+\partial_z.  
\end{equation}
In this context, we are focused on studying CMC surfaces which are invariant under certain 1-parameter subgroups of left translations. In our case we choose vertical translations. We begin by considering a surface $\Sigma$ which is generated by a curve $\gamma\subset \R^2\rtimes_A\{0\}$, $\gamma(t)=(x(t),y(t),0)$ via left translations by elements of the form $(0,0,z)$, $z\in \R$. A parametrization of $\Sigma$ can be given by $\Phi(t,s)=\phi_s(\gamma(t))$ where 
\begin{equation}\label{7}
\phi_s(x,y,z)= (0,0,s)*(x,y,z)=(e^{s A}(x,y)^T,s+z),
\end{equation}
where $(x,y)^T$ is the transpose of the row $(x,y)$. 

\begin{definition}
{\rm 
If $\g$ is a curve of the form $\gamma(t)=(x(t),y(t),0)$, we will call {\it vertically invariant surface} to the image through the parametrization $\Phi(t,s)=\phi_s(\gamma(t))$.
}
\end{definition}

\section{The mean curvature of an invariant surface through vertical translations}\label{sec3}
Let $\Sigma$ be a vertically invariant surface. We next compute the coefficients of the first and second fundamental forms of $\Sigma$, and relate them to the geometry of $\gamma$.
\begin{equation}\label{8}
\Phi_t(t,0)=\gamma'(t)=\left(\begin{array}{c}
x'(t)  \\
y'(t)  \\
0
\end{array}\right)^T=\left[\begin{array}{c}
x'(t)  \\
y'(t)  \\
0
\end{array}\right]^T
\end{equation}
where we are using parenthesis for coordinates with respect to $\{\partial_x,\partial_y,\partial_z\}$ and brackets for coordinates with respect to $\{E_1,E_2,E_3\}$. We impose that $\gamma$ is arc length parameterized (i.e. $(x')^2+(y')^2=1$, observe that $\langle,\rangle$ coincides with the usual inner product of $\R^3$ along $\R^2\rtimes_A\{0\}$), and let  $\theta=\theta(t)$ be a differentiable function such that 
\begin{equation}\label{9}
 x'(t)=\cos \theta(t), \quad y'(t)=\sin\theta(t).
\end{equation}
In order to compute $\Phi_s$, we use that $\Phi_s(t,s)=(F_3)_{\Phi(t,s)}$:
\begin{equation}\label{10}
    F_3(x,y,z)=\left(\begin{array}{c}
\delta  \\
\varepsilon\\
1
\end{array}\right)^T=\left[\begin{array}{c}
\delta a_{11}(-z)+\varepsilon a_{12}(-z)\\
\delta a_{21}(-z)+\varepsilon a_{22}(-z)\\
1
\end{array}\right]^T,
\end{equation}
(see equation 5.9 in \cite{MMP}) where $\delta(x,y)=ax+by$, $\varepsilon(x,y)=cx+dy$. Thus,
\begin{equation}\label{11}
    \Phi_s(t,0)=(F_3)_{\gamma(t)}=\left[\begin{array}{c}
 \delta(t)\\
\varepsilon(t)\\
1
    \end{array}\right]^T,
\end{equation}
where $\delta(t)=\delta(\gamma(t))$ and 
$\varepsilon(t)=\varepsilon(\gamma(t))$.
With $\Phi_t$ and $\Phi_s$ we can compute the coefficients of the first fundamental form:
\begin{equation}\label{12}
E=(x')^2+(y')^2=1,\quad
F=\delta(t)\cos\theta(t)+\varepsilon(t)\sin\theta(t),\quad 
G=1+\delta(t)^2+\varepsilon(t)^2.    
\end{equation}
Using the expressions for $\nabla_{E_i}E_j$, $i,j=1,2,3$ given in \eqref{4} we compute:

\begin{align}
    \begin{split}\label{14}
    \cdot&\nabla_{\Phi_t}\Phi_t(t,0)=\frac{D}{dt}
    \left(\cos\theta(t){E_1}_{\gamma(t)}
    +\sin\theta(t){E_2}_{\gamma(t)}\right)\\
    &=-\theta'(t)\sin\theta(t)E_1
    +\theta'(t)\cos\theta(t)E_2+\cos\theta(t)\nabla_{\gamma'(t)}E_1
    +\sin\theta(t)\nabla_{\gamma'(t)}E_2\\
    &=\left[\begin{array}{c}
    -\theta'(t)\sin\theta(t) \\
    \theta'(t)\cos\theta(t) \\
    0
    \end{array}\right]^T
    +\cos^2\theta(t)aE_3
    +\cos\theta(t)\sin\theta(t)\frac{b+c}{2}E_3\\
    &+\sin\theta(t)\cos\theta(t)\frac{b+c}{2}E_3
    +\sin^2\theta(t)d E_3\\
    &=\left[\begin{array}{c}
    -\theta'(t)\sin\theta(t) \\
    \theta'(t)\cos\theta(t) \\
    a \cos^2\theta(t) + d \sin^2\theta(t) + 
    \cos\theta(t) \sin\theta(t) (b + c)
    \end{array}\right]^T
    \end{split}\\[4ex]
    \begin{split}\label{15}
    \cdot&\nabla_{\Phi_t}\Phi_s(t,0)
    =\frac{D}{dt}(\delta(t){E_1}_{\gamma(t)}
    +\varepsilon(t){E_2}_{\gamma(t)}+{E_3}_{\gamma(t)})\\
    &=\left[\begin{array}{c}
    \delta'(t)\\
    \varepsilon'(t)\\
    0
    \end{array}\right]^T
    +\delta(t)\nabla_{\gamma'(t)}E_1
    +\varepsilon(t)\nabla_{\gamma'(t)}E_2+\nabla_{\gamma'(t)}E_3\\
    &=\left[\begin{array}{c}
    \delta'(t)\\
    \varepsilon'(t)\\
    0
    \end{array}\right]^T
    +\delta(t) \cos\theta(t)aE_3
    +\delta(t)\sin\theta(t)\frac{b+c}{2}E_3
    +\varepsilon(t)\cos\theta(t)\frac{b+c}{2}E_3\\
    &+\varepsilon(t)\sin\theta(t) dE_3+ \cos\theta(t)\left(-aE_1-\frac{b+c}{2}E_2\right)
    +\sin\theta(t)\left(-\frac{b+c}{2}E_1-dE_2\right)\\
    &=\left[\begin{array}{c}
    \sin\theta(t)\frac{b-c}{2} \vspace{0.3cm}\\
    \cos\theta(t) \frac{c-b}{2}\vspace{0.3cm}\\
    \delta(t)\left(\cos\theta(t)a+\sin\theta(t)\frac{b+c}{2}\right)
    +\varepsilon(t)\left(\cos\theta(t)\frac{b+c}{2}+\sin\theta(t)d
    \right)
    \end{array}\right]^T
    \end{split}
\end{align}
\begin{align}
    \begin{split}\label{16}
    \cdot&\nabla_{\Phi_s}\Phi_s(t,0)
    =\left(\nabla_{\delta(t)E_1+\varepsilon(t)E_2+E_3}\delta(t)E_1
    +\varepsilon(t)E_2+E_3\right)(t,0)\\
    &=\delta(t)^2aE_3
    +\delta(t)\varepsilon(t)\frac{b+c}{2}E_3
    +\delta(t)\left(-aE_1-\frac{b+c}{2}E_2\right)
    +\varepsilon(t)\delta(t)\frac{b+c}{2}E_3
    +\varepsilon(t)^2 dE_3\\
    &+\varepsilon(t)\left(-\frac{b+c}{2}E_1-dE_2\right)
    +\delta(t)\frac{c-b}{2}E_2+\varepsilon(t)\frac{b-c}{2}E_1\\
    &=\left[\begin{array}{c}
    -c\varepsilon(t)-a\delta(t)\\
    -b\delta(t) -d\varepsilon(t)\\
    \delta(t)\varepsilon(t)(b+c)+a\delta(t)^2+d\varepsilon(t)^2
    \end{array}\right]^T
    \end{split}
\end{align}
A unitary normal field to $\Sigma$ is
\begin{equation}\label{13}
N=\frac{\Phi_t\times \Phi_s}{|\Phi_t\times \Phi_s|}
=\frac{\sin\theta E_1-\cos\theta E_2
+(\varepsilon \cos\theta -\delta \sin\theta )E_3}
{\sqrt{1+(\varepsilon \cos\theta -\delta \sin\theta )^2}}.
\end{equation}

With these computations the coefficients of the second fundamental form of $\Sigma$ at $(t,0)$ become:
\begin{equation}\label{17}
e=\langle N,\nabla_{\Phi_t}\Phi_t\rangle
=\frac{(\delta \sin \theta -\varepsilon  \cos \theta) [-(a-d) \cos (2 \theta)-a-(b+c) \sin (2 \theta)-d]
-2 \theta '}
{2\sqrt{1 + (\sin\theta \delta - \cos\theta \varepsilon)^2}}
\end{equation}
\begin{equation}\label{18}
 f=\langle N,\nabla_{\Phi_t}\Phi_s\rangle
 =\frac{(\varepsilon \cos \theta -\delta  \sin \theta )
[\cos\theta (2 a \delta +(b+c) \varepsilon )+\sin \theta  ((b+c) \delta +2 d \varepsilon)]
+b-c}
{2 \sqrt{1+(\delta  \sin \theta-\varepsilon \cos \theta)^2}}
\end{equation}
\begin{equation}\label{19}
g=\frac{\cos \theta \left[a \delta ^2 \varepsilon
+\delta \left((b+c) \varepsilon^2+b\right)
+d \varepsilon \left(\varepsilon^2+1\right)\right]
-\sin \theta \left[\delta \left(a \delta^2+a
+(b+c) \delta \varepsilon+d \varepsilon ^2\right)
+c \varepsilon\right]}
{\sqrt{1+(\delta \sin \theta-\varepsilon 
\cos\theta)^2}}
\end{equation}
The mean curvature of $\Sigma$ is given by  
\begin{equation*}
    H=\frac{Eg-2Ff+eG}{2(EG-F^2)},
\end{equation*}
so imposing constant mean curvature $H\in \R$ for $\Sigma$ is equivalent to the following system of ODEs:
\begin{equation}
\left\lbrace\begin{array}{l}
x'=\cos \theta\\
y'=\sin\theta\\
\eqref{21}
\end{array},\right.
\label{sistema}
\end{equation}
where~\eqref{21} is the ODE
\begin{equation}\label{21}
\begin{split}
&8 \left[((c x+d y)\cos \theta -(a x+b y)\sin \theta )^2+1\right]^{3/2}H
=-4 \theta' \left[1+(a x+b y)^2+(c x+d y)^2\right]
\\
&-\sin \theta [(a x+b y) \left[ 3 (a+d) \left((a x+b y)^2+(c x+d y)^2\right)+5 a+3 d\right]
+(3 b-c) (c x+d y)]
\\
&+\sin (3 \theta ) [(a x+b y) \left[ (a+d) \left((a x+b y)^2-3 (c x+d y)^2\right)-a+d\right]
+(b+c) (c x+d y)]
\\
&+\cos \theta [ (c x+d y) \left[3 (a+d) (a x+b y)^2+3 a+5 d\right]
-(b-3 c) (a x+b y)+3 (a+d) (c x+d y)^3]
\\
&+\cos (3 \theta) [-(c x+d y) \left[3 (a+d) (a x+b y)^2-a+d\right]
+(b+c) (a x+b y)+(a+d) (c x+d y)^3].
\end{split}
\end{equation}

\section{Unimodular semidirec products}\label{sec4}
Our goal is to study the solutions of the system~\eqref{sistema} depending of the ambient space (or equivalently depending of the matrix $A\in \mathcal{M}_2(\R)$). We are going to focus on unimodular metric Lie groups. Using Theorem 2.15 of \cite{MeeksPerez} we know that, after scaling the metric, the matrix $A$ can be chosen as:
\[
A=0,\quad A=\left(\begin{array}{cc}
0 & \pm c \\
\frac{1}{c} & 0
\end{array}\right), \quad c\in [1,\infty), \quad \text{or} \quad A=\left(\begin{array}{cc}
0 & 1 \\
0 & 0
\end{array}\right),
\]
in the sense that a unimodular metric Lie group $G$ is isomorphic and isometric to $\R^2\rtimes_A\R$ with its canonical metric. Depending on the chosen matrix $A$, we find one of the following metric Lie groups: $\R^3$, the solvable group ${\rm Sol}_3$ of orientation-preserving rigid motions of the Lorentz-Minkowski plane, the universal cover $\widetilde{E}(2)$ of the group of orientation-preserving rigid motions of the Euclidean plane or the Heisenberg group ${\rm Nil}_3$. Next, we write the equation \eqref{21} in these ambient spaces (we do not include here $\R^3$ since the only vertically invariant surfaces in $\R^3$ are vertical planes):
\begin{itemize}
\item If we consider $\text{Sol}_3$ with the metric that makes it isometric and isomorphic to $\R^2\rtimes_{A(c)}\R$ with $A(c)=\left(\begin{array}{cc}
	0 & c \\
	1/c & 0
\end{array}\right)$ with $c\in [1,\infty)$, then \eqref{21} writes as
\begin{equation}\label{solgeneral}
H=\frac{((1+c^2)\cos(2\theta)+1-c^2)(c^2 y \cos \theta+x \sin\theta)-2\theta'(c^2+x^2+c^4 y^2)}{4c^2\left(1+\left(\frac{x \cos{\theta}}{c}-c y \sin\theta\right)^2\right)^{\frac{3}{2}}}.
\end{equation}

\item If  $G=\text{Sol}_3$ with its standard metric, then $G$ is isometric and isomorphic to $\R^2\rtimes_A\R$ 
 with $A=\left(\begin{array}{cc}
0 & 1 \\
1 & 0
\end{array}\right)$ (this is equivalent to choose $A$ as in the previous item with $c=1$). Equation \eqref{21} writes as:
\begin{equation}
H=\frac{\cos(2\theta)(x\sin\theta 	+y\cos\theta)
-(1+x^2+y^2)\theta'}
{2(1+(x\cos\theta-y\sin\theta)^2)^{\frac{3}{2}}}.
\end{equation}
We are not going to deal with vertically invariant minimal surfaces in $\text{Sol}_3=\R^2\rtimes_{A(1)}\R$ since they have already been studied in \cite{loraf1}.
    
\item If $G=\text{Nil}_3$ with its standard metric, which is isometric to $\R^2\rtimes_A\R$ with $A=\left(\begin{array}{cc}
0 & 1 \\
0 & 0
\end{array}\right)$, then equation \eqref{21} takes the form
\begin{equation}\label{nil}
H=-\frac{y\cos\theta \sin^2\theta +(1+y^2 )\theta'}
{2(1+y^2\sin^2\theta)^{\frac{3}{2}}}.
 \end{equation}
    
\item If we consider $\widetilde{E}(2)$ with the metric that makes it isometric and isomorphic to $\R^2\rtimes_{A(c)}\R$ with $A(c)=\left(\begin{array}{cc}
	0 & -c \\
	1/c & 0
\end{array}\right)$ with $c\in [1,\infty)$, then \eqref{21} writes as
\begin{equation}\label{26}
H=\frac{\left[\left(c^2-1\right) \cos (2 \theta )-c^2-1\right] 
\left(c^2 y \cos \theta -x \sin \theta \right)-2 \theta ' \left(c^2+x^2+c^4 y^2\right)}
{4\left[\left(x \cos \theta+c^2 y \sin \theta\right)^2+c^2\right] \sqrt{1+\left(\frac{x \cos \theta }{c}+c y \sin \theta \right)^2}}.
\end{equation}
\end{itemize}

\section{Vertically invariant minimal surfaces in $\text{Nil}_3$}\label{sec5}
In order to keep computations simple, we will focus on minimal surfaces, which means that we will impose $H=0$. If $\Sigma$ is minimal then system \eqref{sistema} can be written as:
\begin{equation}\label{sistema24}
\left\lbrace\begin{array}{l}
x'=\cos \theta\\
y'=\sin\theta\\
\theta'=\frac{-y\cos\theta \sin^2\theta }{1+y^2}
\end{array}.\right.
\end{equation}
The next proposition is clear by killing the previous numerator. 
\begin{proposition}\label{prop1}
The only solutions of the system \eqref{sistema24} that are obtained by choosing $\theta$ as a constant are given by 
\begin{equation}\label{sol1}
(x(t),y(t),\theta)=(t,0,0)+(x(0),y(0),0),
\end{equation}
\begin{equation}\label{sol2}
    (x(t),y(t),\theta)=(0,t,\pi/2)+(x(0),y(0),0).
\end{equation}
\end{proposition}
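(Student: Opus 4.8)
The plan is simply to impose the hypothesis that $\theta$ is constant, say $\theta\equiv\theta_0$, in the third equation of \eqref{sistema24} and read off the resulting constraints. Since then $\theta'\equiv 0$, that equation forces the numerator to vanish identically along the curve, i.e.\ $y(t)\,\cos\theta_0\,\sin^2\theta_0=0$ for every $t$ in the interval of definition. I would then split the discussion according to which of the three factors is responsible for this vanishing.

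If $\sin\theta_0=0$, then $\theta_0\in\{0,\pi\}$ and the first two equations of \eqref{sistema24} reduce to $x'=\pm1$, $y'=0$; integrating gives $x(t)=\pm t+x(0)$, $y(t)\equiv y(0)$, which for $\theta_0=0$ is exactly \eqref{sol1} (the choice $\theta_0=\pi$ produces the same curve with reversed orientation). If instead $\cos\theta_0=0$, then $\theta_0\in\{\pi/2,-\pi/2\}$ and the first two equations become $x'=0$, $y'=\pm1$, so $x(t)\equiv x(0)$ and $y(t)=\pm t+y(0)$, which is \eqref{sol2}. In the remaining case $\sin\theta_0\neq0$ and $\cos\theta_0\neq0$, the identity $y(t)\cos\theta_0\sin^2\theta_0=0$ forces $y\equiv 0$, hence $y'\equiv 0$; but $y'=\sin\theta_0\neq 0$, a contradiction, so this case contributes no solutions.

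I do not expect any genuine obstacle: once $\theta$ is taken constant the system decouples and each case is an elementary integration, which is why the author can call the statement clear. The only points deserving a word of care are the identification of the orientation-reversed solutions ($\theta_0=\pi$ versus $\theta_0=0$, and $\theta_0=-\pi/2$ versus $\theta_0=\pi/2$) with the displayed representatives, and the observation that in \eqref{sol1}--\eqref{sol2} the base point is arbitrary because it is absorbed into the additive constant $(x(0),y(0),0)$, so in particular $y(0)$ is free in \eqref{sol1} even though the first summand is written as $(t,0,0)$.
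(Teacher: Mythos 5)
Your proposal is correct and follows exactly the route the paper intends: the paper dispatches this proposition with the single remark that it is ``clear by killing the previous numerator,'' which is precisely your step of setting $\theta'\equiv 0$ and analyzing the vanishing of $y\cos\theta_0\sin^2\theta_0$. Your case analysis, including the observation that $\theta_0=\pi$ and $\theta_0=-\pi/2$ only reverse orientation and that the mixed case is impossible, correctly fills in the elementary details the author omits.
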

\begin{proposition}\label{prop2}
If $\left\lbrace x,y,\theta\right\rbrace$ is a solution of \eqref{sistema24} with a nonconstant $\theta$, then $\theta$ never attains the values $\left\lbrace \frac{k\pi}{2},\, k\in \Z\right\rbrace$.
\end{proposition}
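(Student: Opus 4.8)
The plan is to establish the contrapositive in the following strong form: if a solution $\{x,y,\theta\}$ of \eqref{sistema24} satisfies $\theta(t_0)=k\pi/2$ at some point $t_0$ of its interval of definition, for some $k\in\Z$, then $\theta$ is identically equal to $k\pi/2$, so it cannot be a nonconstant solution. Only the third equation of \eqref{sistema24} is relevant, since $\theta'$ does not depend on $x$; along the solution that equation reads $\theta'(t)=-\frac{y(t)}{1+y(t)^2}\cos\theta(t)\,\sin^2\theta(t)$, and the coefficient $\frac{|y|}{1+y^2}$ is bounded by $\tfrac12$ for every real $y$ (by $1+y^2\ge 2|y|$).

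The key step is a Gr\"onwall-type estimate for $v(t):=\theta(t)-k\pi/2$, and I would split it according to the parity of $k$. If $k$ is even, write $k\pi/2=m\pi$ and bound $\sin^2\theta=|\sin\theta|^2\le|\sin\theta|=|\sin(\theta-m\pi)|\le|\theta-m\pi|$ while keeping $|\cos\theta|\le 1$. If $k$ is odd, then $\cos(k\pi/2)=0$, and one bounds $|\cos\theta|=|\sin(\theta-k\pi/2)|\le|\theta-k\pi/2|$ while keeping $\sin^2\theta\le 1$. In either case these inequalities hold for all values of $\theta$, so combining with $\frac{|y|}{1+y^2}\le\tfrac12$ one gets $|v'(t)|\le\tfrac12\,|v(t)|$ on the entire interval of definition of the solution. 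Since $v(t_0)=0$, Gr\"onwall's inequality (applied on $[t_0,\cdot)$ and on $(\cdot,t_0]$) forces $v\equiv 0$, that is $\theta\equiv k\pi/2$, which is constant — contradicting the hypothesis.

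An essentially equivalent packaging, which I might also state, uses uniqueness of solutions: the right-hand side of the autonomous system \eqref{sistema24} is smooth on all of $\R^3$ (because $1+y^2>0$), and for each $k$ the explicit triple $\theta\equiv k\pi/2$, $x(t)=x(t_0)+(t-t_0)\cos(k\pi/2)$, $y(t)=y(t_0)+(t-t_0)\sin(k\pi/2)$ solves \eqref{sistema24}; a solution with $\theta(t_0)=k\pi/2$ shares its initial data at $t_0$, hence coincides with it by Picard--Lindel\"of. I do not expect a genuine obstacle along either route; the only point needing care is checking that the trigonometric estimates together with $\frac{|y|}{1+y^2}\le\tfrac12$ are valid \emph{globally} in $\theta$, so that the differential inequality $|v'|\le\tfrac12|v|$ holds on the full domain and not just near $t_0$, and that the correct factor — $\sin^2\theta$ when $k$ is even, $\cos\theta$ when $k$ is odd — is the one controlled by $|\theta-k\pi/2|$.
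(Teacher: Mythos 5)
Your proof is correct, and your second ``packaging'' is exactly the argument in the paper: the explicit straight-line solution with $\theta\equiv k\pi/2$ shares initial data at $t_0$ with the given solution, so Picard--Lindel\"of forces them to coincide, contradicting nonconstancy. The Gr\"onwall route is just a self-contained quantitative proof of that same uniqueness statement (and the parity split with the bounds $\sin^2\theta\le|\theta-m\pi|$ for $k$ even, $|\cos\theta|=|\sin(\theta-k\pi/2)|\le|\theta-k\pi/2|$ for $k$ odd, together with $\tfrac{|y|}{1+y^2}\le\tfrac12$, does hold globally as you claim), so it adds rigor one could want if one preferred not to cite the uniqueness theorem, but it is not a genuinely different approach.
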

\begin{proof}
Suppose that $\left\lbrace x,y,\theta\right\rbrace$ is a solution of \eqref{sistema24} with a nonconstant $\theta$ which satisfies that $\theta\in \left\lbrace \frac{k\pi}{2},\, k\in \Z\right\rbrace$ at $t_0$. We can consider the following solution of \eqref{sistema24}: 
\[
\left\lbrace x(t_0)+\cos\left(\frac{k\pi}{2}\right)(t-t_0), y(t_0)+\sin\left(\frac{k\pi}{2}\right)(t-t_0),\frac{k \pi}{2}\right\rbrace.
\]
Due to the uniqueness of the initial value problem associated to \eqref{sistema24}, we have $\theta(t)=\frac{k\pi}{2}$ which is a contradiction.
\end{proof}

\begin{remark}
{\rm From Proposition~\ref{prop2} we have that the generating curve $\gamma$ of a solution of \eqref{sistema24} that is not in Proposition \ref{prop1} is a graph over the axes $x$ and $y$. 
Moreover, since \eqref{sistema24} implies that the derivatives of the functions $x$, $y$ and $\theta$ are bounded, then an application of the Picard-Lindelöf Theorem gives that the maximal interval of definition of $\gamma$ is $\R$.}
\end{remark}

In order to describe the solutions of \eqref{sistema24} with a nonconstant $\theta$, we can assume that $\theta(t)\in (0,\pi/2)$ for all $t\in \R$. We give a first integral for the system \eqref{sistema24}:
\[
J(t):=(1+y(t)^2)\tan^2\theta(t),
\]
which means that $J(t)$ is constant along any solution of \eqref{sistema24}. We rewrite $J$ as
\[
J=(1+y^2)\frac{y'^2}{1-y'^2},
\]
which makes solving \eqref{sistema24} equivalent to solve 
\begin{equation}\label{sistemanilbis}
\left\lbrace\begin{array}{l}
     x'=\cos\theta \\
     y'=\sin\theta \\
     (1+y^2)\frac{y'^2}{1-y'^2}=a\in [0,\infty)
\end{array}.\right. 
\end{equation}
We now define the following diffeomorphism from $\R$ to $\R$:
\[
\begin{array}{cccl}
 f\colon & \R&\rightarrow &\R \\
& t &\mapsto & 
\frac{1}{2}(1+a)\ln(t+\sqrt{1+a+t^2})+\frac{1}{2}t\sqrt{1+a+t^2},
\end{array}
\]
where $a\geq 0$. The last equation of \eqref{sistemanilbis} can be solved using direct integration:
\begin{equation}\label{yt}
y(t)=f^{-1}(\sqrt{a}t+c_1), \quad c_1\in \R.
\end{equation}
If $a=0$, from \eqref{sistemanilbis} we deduce that the solution of this system is given by \eqref{sol2}. Otherwise we can explicitly integrate the function $x$ in \eqref{sistemanilbis}:
\begin{equation}\label{xt}
x(t)=c_2+\frac{1}{2\sqrt{a}}\left(\text{arcsinh}
(f^{-1}(\sqrt{a}t+c_1))
+f^{-1}(\sqrt{a}t+c_1)\sqrt{1+(f^{-1}(\sqrt{a}t+c_1))^2}\right),
\end{equation}
with $c_1,c_2\in \R$. These expressions describe all vertically invariant minimal surfaces in $\text{Nil}_3$. 
\begin{remark}
{\rm
\begin{enumerate}

\begin{figure}[h]
\centering
\includegraphics[scale=0.7]{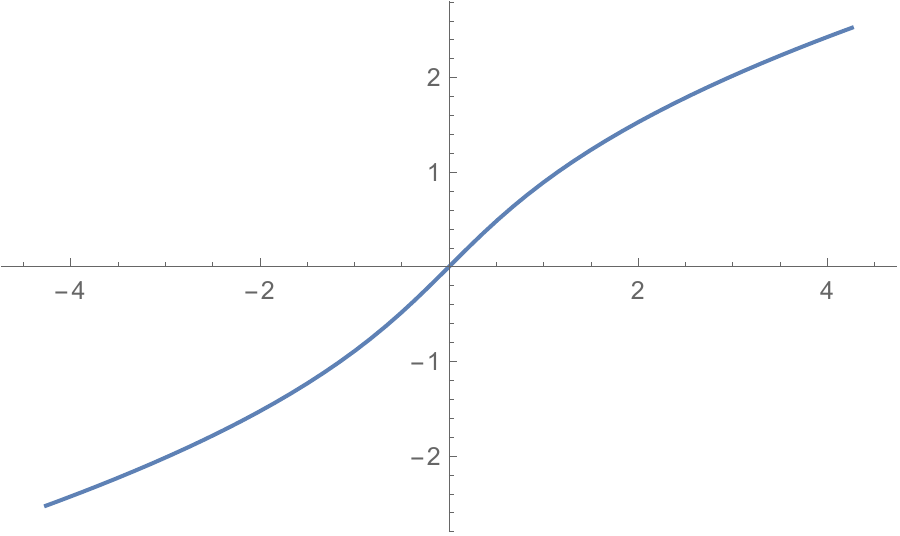}
\caption{The solution $\g$ of~\eqref{sistemanilbis} with the initial conditions $\g(0)=(0,0)$, $a=1$.}
\label{fig4}
\end{figure}

\item These surfaces are well-known; in fact CMC surfaces in ${\rm Nil}_3$ invariant under any one-dimensional subgroup of the isometry group of ${\rm Nil}_3$ were classified by Figueroa-Mercuri-Pedrosa in~\cite{FMP}. To be more precise, the surfaces given through \eqref{yt} and \eqref{xt} are described in Theorem 6 of \cite{FMP}, as we explain next. 

In \cite{FMP} Heisenberg space ${\rm Nil}_3$ is given as $\R^3$ with the following inner product:
\[
(x_1,y_1,z_1)(x_2,y_2,z_2)=(x_1+x_2,y_1+y_2,z_1+z_2+\frac{1}{2}(x_1 y_2-x_2 y_1)).
\]
Considering the canonical basis $\left\lbrace e_1,e_2,e_3\right\rbrace$ of $\R^3$ then 
\[
E_1=\frac{\partial}{\partial x}-\frac{y}{2}\frac{\partial}{\partial z},\quad E_2=\frac{\partial}{\partial y}+\frac{x}{2}\frac{\partial}{\partial z},\quad E_3=\frac{\partial}{\partial z}
\]
are the only left invariant vector fields verifying $(E_k)_0=e_k$, $k\in \left\lbrace 1,2,3\right\rbrace$. The canonical metric of ${\rm Nil}_3$ is defined as the only left invariant metric for which $\left\lbrace e_1,e_2,e_3\right\rbrace$ is an orthonormal basis and, for this metric, $\left\lbrace E_1,E_2,E_3\right\rbrace$ has to be an orthonormal basis of vector fields of $\R^3$. 

We can give an isomorphism (which is also a isometry) between this model of $\text{Nil}_3$ and the semidirect product $\R\rtimes_A \R$ with $A=\left(\begin{array}{cc}
   0  & 1 \\
   0  & 0
\end{array}\right)$ (and therefore, it identifies ${\rm Nil}_3$ and $\R^2\rtimes_A\R$ as metric Lie groups) as follows:
\[
\begin{array}{cccl}
\Xi: & {\rm Nil_3}&\rightarrow &\R^2\rtimes_A\R \\
& (x,y,z) &\mapsto & \left(z+\frac{xy}{2},y,x\right).
\end{array}
\]
The minimal surfaces described in Theorem 6 of \cite{FMP} are either vertical planes or they are given through the equation
\[
z(x,y)=\frac{xy}{2}-c\left[\frac{y\sqrt{1+y^2}}{2}+\frac{1}{2}\ln\left(y+\sqrt{1+y^2}\right)\right].
\]
Using $\Xi$, the parametrization $\left(s,u,z(s,u)\right)$, $s,u\in \R$ is mapped into
\[
\left(s u -c\left[\frac{u\sqrt{1+u^2}}{2}+\frac{1}{2}\ln\left(u+\sqrt{1+u^2}\right)\right],u,s\right)\in \R^2\rtimes_A\R.
\]
The generating curve contained in the plane $z=0$ of this surface is
\[
\left(-c\left[\frac{u\sqrt{1+u^2}}{2}+\frac{1}{2}\ln\left(u+\sqrt{1+u^2}\right)\right],u,0\right)\in \R^2\rtimes_A\R,
\]
which generates \eqref{sol2} if $c=0$ and \eqref{yt} and \eqref{xt} when $c\not=0$ after using the change of variables $u=y(t)$ and the equality $\text{arcsinh}(x)=\ln\left(x+\sqrt{1+x^2}\right)$ for all $x\in\R$. Vertical planes in $\text{Nil}_3$ are mapped by $\Xi$ into the vertical planes generated by \eqref{sol1}.

\begin{figure}[h]
    \centering
    \includegraphics[scale=0.49]{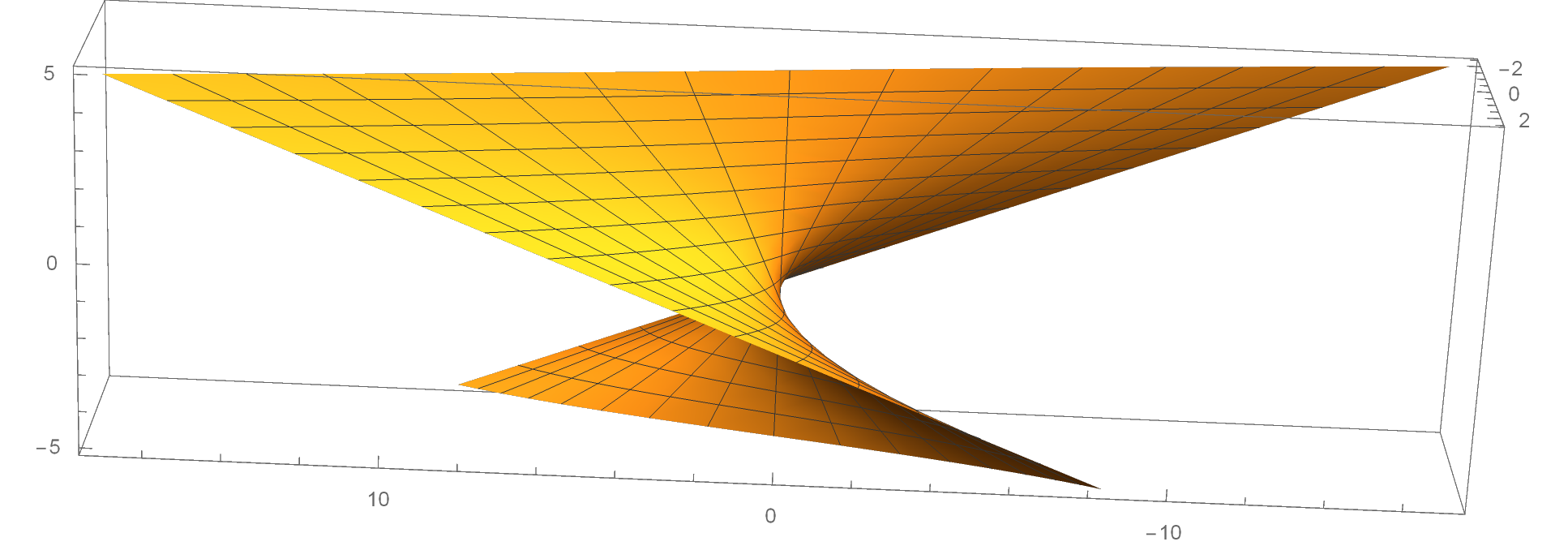}
    \caption{Vertically invariant minimal surface in $\text{Nil}_3$ generated through $\Phi(t,s)=\phi_s(\gamma(t))$, with $\gamma(t)=(x(t),y(t),0)$, where $x,y$ are solutions of the system \eqref{sistemanilbis} with initial conditions $x(0)=y(0)=0$ and $a=1$.}
    \label{fig1}
\end{figure}

\item The solution \eqref{sol1} in Proposition \ref{prop1} gives through the parametrization $\Phi(t,s)=\phi_s(\gamma(t))$, described in \eqref{7}, a vertical plane. All solutions of \eqref{sistema24} that are not in Proposition~\ref{prop1} look similar to the one in Figure \ref{fig1}.

\item When $a\to +\infty$, from the equation $(1+y^2)\cot\theta =a$ we have that $\theta\to 0$ uniformly. Therefore the family of solutions described by \eqref{yt} and \eqref{xt} converge to the solution \eqref{sol1} in Proposition \ref{prop1} when $a\to +\infty$.
\end{enumerate}
}
\end{remark} 

\section{Vertically invariant minimal surfaces in $\widetilde{E}(2)$}\label{sec6}

 If $G=\widetilde{E}(2)$ with its standard metric (isometric to the flat $\R^3$), which is isometric and isomorphic to $\R^2\rtimes_A\R$ with $A=\left(\begin{array}{cc}
0 & -1 \\
1 & 0
\end{array}\right)$, equation \eqref{21} can be written as
\begin{equation}\label{25}
H=\frac{x\sin\theta-y\cos\theta-(1+x^2+y^2)\theta'}
{2(1+(x\cos\theta+y\sin\theta)^2)^{\frac{3}{2}}}
    \end{equation}
When $G=\widetilde{E}(2)$, the orbit of a point $(x,y,0)\in G$ is
\[
\phi_s(x,y,0)=(x \cos s-y\sin s,x\sin s+y\cos s,s),
\]
$s\in \R$, which describes a helicoid. Recall that $\widetilde{E}(2)$ is endowed with its standard flat metric, therefore, vertically invariant CMC surfaces in $\widetilde{E}(2)$ are a subset of the classical CMC surfaces in $\R^3$ invariant through a screw motion. In particular the surfaces of this section are minimal surfaces of $\R^3$. Again by imposing $H=0$, equation~\eqref{25} is written as:
\begin{equation}\label{e2}
\theta'=\frac{x\sin\theta-y\cos\theta}{1+x^2+y^2}
\end{equation}
and the system \eqref{sistema} becomes 
\begin{equation}\label{se2}
\left( \begin{array}{c}
	x \\
	y \\
	\t \end{array}\right)'
=\left( \begin{array}{c}
\cos \theta\\
\sin \theta \\
\frac{x\sin\theta-y\cos\theta}{1+x^2+y^2}		
\end{array}		\right).
\end{equation}
\eqref{se2} is a system of first order ODEs in the three-dimensional manifold $\R^2\times \esf^1=\{ (x,y,\t)\ | \ x,y\in \R,\t\in \esf^1\}$ that also can be viewed as a second order system in~$\R^2$:
\[
x''=(\cos\t)'=-\t'\sin \t
=-\frac{x\sin\theta-y\cos\theta}{1+x^2+y^2}y',
\qquad 
y''
=\frac{x\sin\theta-y\cos\theta}{1+x^2+y^2}x'.
\]

\subsection{Type I solutions}\label{sub61}
Given a fixed $\theta\in \esf^1$, the straight line $(x,y)(t)=t(\cos\t,\sin \t)$, $t\in \R$, is a solution of \eqref{se2}. By moving $\t\in \esf^1$ we find all solutions of~\eqref{se2} with $\g(0)=(0,0)$. If we fix $(x_0,y_0)\in \R^2\setminus \{ (0,0)\}$ then we can not give explicitly the solutions of \eqref{se2} with $\g(0)=(x_0,y_0)$, unless the condition $\g'(0)=\frac{\g(0)}{\| \g(0)\|}$ is verified (in this situation $\g$ is a reparametrization of the straight line joining $(0,0)$ and $(x_0,y_0)$). A solution of~\eqref{se2} will be called of type I if it parameterizes an affine line passing through the origin. The solutions of~\eqref{se2} that are not of type I will be called type II solutions.

\subsection{Solutions of type II}\label{sub62}

The right-hand side of~\eqref{e2} can be written as
\[
\frac{x\sin\theta-y\cos\theta}{1+x^2+y^2}=
\frac{\langle J\g,\g'\rangle }{1+|\g|^2},
\]
where $J(x,y)=(-y,x)$.
This leads us to write~\eqref{se2} as
\begin{equation}\label{se3}
\left( \begin{array}{c}
\g \\
\g' \end{array}\right)'
=\left( \begin{array}{c}
		x' \\
		y'\\
\frac{\langle J\g,\g'\rangle }{1+|\g|^2}J\g'
\end{array}\right). 	
\end{equation}
This means that, if we call $\kappa $ to the curvature of $\g$ with respect to the flat metric of the plane $\left\lbrace z=0\right\rbrace$, our ODE becomes $\kappa J\g'=\g''=\frac{\langle J\g,\g'\rangle }{1+|\g|^2}J\g'$, or equivalently
\begin{equation}\label{se4}
\kappa=\frac{\langle J\g,\g'\rangle }{1+|\g|^2}
=-\frac{\xi}{1+|\g|^2},
\end{equation}
where $\xi=\langle \g,J\g'\rangle $ is the support function of $\g$. The next theorem gives a complete description of solutions of \eqref{se4}.
\begin{theorem}\label{lema5}
If $\g$ verifies~\eqref{se4}, then
\begin{enumerate}
\item $|\kappa(t)|\leq 1/2$, $\forall t\in \R$.
\item If $\g$ intersects the origin $(0,0)$, then $\g$ is type I. Otherwise, $\g$ is a strictly convex curve, i.e. $\kappa$ is a nowhere vanishing function. In particular $\kappa$ has constant sign.
\item The maximal interval of definition of $\g$ is $\R$, and $\kappa (t)$ goes to 0  when $t\to 
\pm \infty$.
\item The curvature $\kappa$ verifies the ODE $\langle \g,\g'\rangle\kappa +(1+|\g|^2)\kappa'=0$.

\item Given $A\in SO(2)$, the curve $A\g$ is also a solution of \eqref{se4}.


\item If $\g$ does not pass through the origin, then there exists a unique $t_0\in \R$ such that 
$\kappa'(t_0)=0$. Moreover:
\begin{itemize}
\item The distance from $\g$ to the origin reaches its unique critical point at $\g(t_0)$, which is a global minimum. 
\item The image of $\g$ is symmetric with respect to the reflection through the normal line to $\g$ at $t_0$.
\item The support function $\xi$ of $\g$ has a unique critical point at $t_0$. 
\item All the self-intersection points of $\g$ occur for opposite values of its parameter (i.e. $\g(t_1)=\g(t_2)$ $\Rightarrow$ $t_1=\pm t_2$) and they are all in the normal line $r$ to $\g$ at $t_0$. Moreover the angle of intersection between $r$ and $\g$ at $t$ is $\pi/2$ if and only if $t=t_0$.
\end{itemize} 
\end{enumerate} 
\end{theorem}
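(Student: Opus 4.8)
The plan is to reduce equation~\eqref{se4} to a single autonomous second-order ODE for the distance $\rho(t)=|\g(t)|$ to the origin, and then to combine this with the uniqueness theorem for the system~\eqref{se3} (whose right-hand side is smooth, since $1+|\g|^2>0$) and with the $O(2)$-equivariance of~\eqref{se4}. Three items are quick computations. Item~(1) is Cauchy--Schwarz: $|\kappa|=|\langle J\g,\g'\rangle|/(1+|\g|^2)\le|\g|/(1+|\g|^2)\le 1/2$. Item~(5) holds because every $A\in SO(2)$ commutes with $J$ and preserves $\langle,\rangle$ and the curvature, so $A\g$ solves~\eqref{se4} verbatim. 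For item~(4) I would differentiate $\kappa(1+|\g|^2)=-\langle\g,J\g'\rangle$, using the Frenet relation $\g''=\kappa J\g'$ and $J^2=-\mathrm{Id}$; the resulting identity $\langle\g,\g'\rangle\kappa+(1+|\g|^2)\kappa'=0$ integrates, after separating variables, to the first integral $\kappa^2(1+|\g|^2)=c_0$ for some constant $c_0\ge 0$, which I will use repeatedly.

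For item~(2): if $\g(t_1)=(0,0)$, or if $\kappa(t_1)=0$, then in either case the support function $\xi(t_1)=\langle\g(t_1),J\g'(t_1)\rangle$ vanishes, so $\g(t_1)$ is a scalar multiple of $\g'(t_1)$; hence the unit-speed affine line through $\g(t_1)$ in the direction $\g'(t_1)$ passes through the origin and is itself a solution of~\eqref{se4}, so by uniqueness $\g$ coincides with it and is of type~I. Contrapositively, a type~II solution has $\kappa$ nowhere vanishing, hence is strictly convex, and by connectedness $\kappa$ has constant sign.

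I would deduce item~(3) and the statements in~(6) about $t_0$, the distance function, and the support function from the scalar ODE. Decomposing $\g$ in the orthonormal frame $\{\g',J\g'\}$ gives $|\g|^2=\langle\g,\g'\rangle^2+\xi^2$, that is $\xi^2=\rho^2(1-\rho'^2)$ since $\langle\g,\g'\rangle=\rho\rho'$; comparing with the first integral rewritten as $\xi^2=c_0(1+\rho^2)$ yields $\rho'^2=1-c_0-c_0/\rho^2$, hence $\rho''=c_0/\rho^3$. For a type~II solution $\rho>0$ is smooth, $c_0>0$ (if $c_0=0$ then $\kappa\equiv 0$ and $\g$ is a line through the origin), and $c_0<1$ because $\xi^2\le\rho^2$. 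Thus $\rho$ is strictly convex, bounded below by $\sqrt{c_0/(1-c_0)}>0$, and not monotone (were it monotone with $\rho$ bounded, $\rho''=c_0/\rho^3$ bounded below away from $0$ would make $\rho'$ unbounded); hence $\rho$ has a unique critical point $t_0$, a global minimum, with $\rho\to+\infty$ as $t\to\pm\infty$, so $\kappa^2=c_0/(1+\rho^2)\to 0$. The maximal interval of definition is $\R$ because the right-hand side of~\eqref{se3} has norm at most $\sqrt{5}/2$, so no solution escapes in finite time. Substituting $\kappa\ne 0$ into the item~(4) identity gives $\kappa'(t)=0\iff\langle\g(t),\g'(t)\rangle=0\iff\rho'(t)=0\iff t=t_0$, which proves the uniqueness of $t_0$, the claim about the distance function, and (through $\xi'=-\kappa\langle\g,\g'\rangle$) the claim about the support function.

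The remaining, subtlest point is the reflection symmetry. Since $\langle\g(t_0),\g'(t_0)\rangle=0$, the normal line $r$ to $\g$ at $t_0$ has direction $J\g'(t_0)\parallel\g(t_0)$, so it passes through the origin; let $\sigma\in O(2)$ be the reflection in $r$, so that $\sigma\g(t_0)=\g(t_0)$ and $\sigma\g'(t_0)=-\g'(t_0)$. Using $\sigma J=-J\sigma$ and that $\sigma$ is a linear isometry, I would check that $\tilde\g(s):=\sigma\g(2t_0-s)$ again solves~\eqref{se4} and has the same position and velocity as $\g$ at $t_0$; uniqueness then gives $\sigma\g(2t_0-s)=\g(s)$. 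Consequently $\rho(2t_0-s)=\rho(s)$, so strict convexity of $\rho$ forces any self-intersection $\g(t_1)=\g(t_2)$ with $t_1\ne t_2$ to satisfy $t_1+t_2=2t_0$ (equivalently $t_1=-t_2$ once the parameter is translated so that $t_0=0$), whence $\g(t_1)=\sigma\g(t_1)$ lies on $r$; and any $t$ with $\g(t)\in r$ satisfies $\g(t)\parallel\g(t_0)$, so the angle between $\g$ and $r$ equals $\pi/2$ exactly when $\g'(t)\perp\g(t_0)$, i.e. when $\langle\g(t),\g'(t)\rangle=0$, i.e. when $t=t_0$. I expect the main obstacle to be items~(3) and~(6): squeezing the first integral into the clean form $\xi^2=c_0(1+\rho^2)$, ruling out that $\rho$ is monotone so that the minimum is genuinely attained, and keeping the signs straight in the reflection argument (the relations $\sigma J=-J\sigma$, $\sigma\g'(t_0)=-\g'(t_0)$) so the uniqueness principle applies cleanly.
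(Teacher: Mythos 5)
Your proposal is correct, and for the two substantive items (3 and 6) it takes a genuinely different, more self-contained route than the paper. Items 1, 2, 4 and 5 and the completeness of the maximal interval coincide with the paper's arguments (Cauchy--Schwarz; $\xi(t_0)=0$ plus uniqueness of the initial value problem; differentiating $(1+|\g|^2)\kappa+\xi=0$; equivariance; boundedness of the right-hand side of \eqref{se3}). Where you diverge: the paper proves that $\g$ is unbounded by returning to minimal surface theory --- it traps the generated surface in a vertical cylinder and invokes the Meeks--Rosenberg maximum principle at infinity --- and it proves uniqueness of the critical point $t_0$ by a case analysis on a putative second axis of symmetry (each case forcing $\g$ closed or $\kappa$ periodic), plus a separate injectivity argument for $t\mapsto|\g(t)|$ and an appeal to the plane/helicoid classification. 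You instead extract the first integral $\kappa^2(1+|\g|^2)=c_0$ from item 4 and, via the frame decomposition $|\g|^2=\langle\g,\g'\rangle^2+\xi^2$, reduce everything to the scalar ODE $\rho''=c_0/\rho^3$ with $0<c_0<1$ and $\rho\geq\sqrt{c_0/(1-c_0)}$; strict convexity of $\rho$ then yields unboundedness, the unique global minimum, $\kappa\to 0$, the equivalence $\kappa'=0\Leftrightarrow\rho'=0$, and the relation $t_1=-t_2$ at self-intersections, with no minimal-surface input and in fact an explicit formula, since $(\rho^2)''=2(1-c_0)$ makes $|\g|^2$ a quadratic polynomial in $t$. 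Two small points to tidy: differentiating $\rho'^2=(1-c_0)-c_0/\rho^2$ to get $\rho''=c_0/\rho^3$ is only licit where $\rho'\neq0$, so either observe that arcs of circles centred at the origin never solve \eqref{se4} (hence $\{\rho'\neq0\}$ is dense) or compute $(\rho^2)''=2(1+\kappa\xi)=2(1-c_0)$ directly; and note that the paper's proof additionally shows that self-intersections actually occur (via the plane-or-helicoid alternative), which your argument does not recover --- though the theorem as stated only constrains whatever self-intersections exist.
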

\begin{proof}
From~\eqref{se4} and the Schwarz inequality we have that
\begin{equation}\label{36}
|\kappa|\leq \frac{|\xi|}{1+|\g|^2}\leq 
\frac{|\g|}{1+|\g|^2},
\end{equation}
which proves item 1.

If $\kappa (t_0)=0$ for some $t_0\in \R$, then using~\eqref{se4} we have $\xi(t_0)=0$ . Therefore, either $\g(t_0)=(0,0)$ (in this case $\g$ is of type I) or $\g(t_0)\neq (0,0)$ is orthogonal to $J\g'(t_0)$, and therefore $\g(t_0)$ is collinear with $\g'(t_0)$. In this case, the uniqueness of the initial value problem associated to~\eqref{se3} implies that $\g$ is of type I. This proves item 2.

We can view~\eqref{se3} as an ODE of the following form: $X'=F(t,X)$ with $X=(\g,\g')\in \R^4$. The fact that $F$ is bounded in our equation (because $\g$ is arc length parameterized and because of item 1) and Picard-Lindelöf Theorem ensure that the maximal interval of definition of $X$ is $\R$.

$\g$ cannot be bounded, otherwise the surface $\Sigma\subset \R^2\rtimes_A\R$ generated by $\g$ from~\eqref{7} would be inside a vertical straight cylinder $\mathbb{D}(R)\times \R\subset \R^3$, where $\mathbb{D}(R)$ is the open disc of radius $R>0$ in $\R^2$. Since $\R^2\rtimes_A\R$ is isometric to $\R^3$ with its standard metric, we would reach a contradiction by applying the maximum principle at infinity (see \cite{MeeksRosenberg}) to $\Sigma$ and a vertical plane that keeps $\mathbb{D}(R)\times \R$ on one side. Since $|\g(t)|\to 
\infty $ when $|t|\to \infty$, from~\eqref{36} we conclude that $\kappa(t)\to 0$ when $t\to \pm \infty$.

By cross multiplying in~\eqref{se4} and taking derivatives we get
\begin{equation}\label{inyect}
    \begin{split}
0&=[(1+|\g|^2)\kappa+\xi]'=2\langle \g,\g'\rangle
\kappa+(1+|\g|^2)\kappa '+(\langle \g,J\g'\rangle )'\\
&=2\langle \g,\g'\rangle
\kappa+(1+|\g|^2)\kappa '+\langle \g,-\kappa \g'\rangle =\langle \g,\g'\rangle
\kappa+(1+|\g|^2)\kappa ',
\end{split}
\end{equation}
which proves item 4.

To prove item 5, let $A\in SO(2)$, and $\wt{\g}=A\g$. The curvature $\wt{\kappa}$ of $\wt{\g}$ is $\wt{\kappa}=\kappa$, and $\langle J\wt{\g},\wt{\g}'\rangle =\langle JA\g, A\g'\rangle =\langle AJ\g,A\g'\rangle =\langle J\g, \g'\rangle $ therefore $\wt{\g}$ satisfies~\eqref{se3}.

To prove item 6: due to item 2 we can suppose (up to orientation) that the curvature of $\g$ is positive. Using item 3 we know that $\kappa\colon \R\to (0,\infty)$ reaches a maximum for some $t_0\in \R$. Since $\kappa'(t_0)=0$ and $\kappa(t_0)\neq 0$, item 4 implies that the position vector $\g(t_0)$ is orthogonal to $\g'(t_0)$ (both vectors are nonzero). Item 5 implies that up to a rotation around the origin, we can assume that $\g(t_0)=(x(t_0),0)$ and therefore, $x'(t_0)=0$, $y'(t_0)=\pm 1$. Also, without loss of generality, using a translation of the parameter $t$, we can suppose that $t_0=0$.

We now prove that $\g$ is symmetric with respect to the real axis. Let $t\in \R\mapsto \wt{\g}(t)=(x(t),-y(t))$ be the reflection of $\g$ through the real axis. The curvature $\wt{\kappa}$ of $\wt{\g}$ is $\wt{\kappa}=-\kappa$, and 
\[
\langle J\wt{\g},\wt{\g}'\rangle =\langle J(x,-y),(x',-y')\rangle =\langle (y,x),(x',-y')\rangle =x'y-xy'=-\langle J\g,
\g'\rangle ,
\]
therefore $\wt{\g}$ satisfies~\eqref{se3}. Since $\wt{\g}(t_0)=\g(t_0)$ and $\wt{\g}'(t_0)=-\g'(t_0)$, we conclude that $\g$ is symmetric through the reflection in the real axis. In particular, $\g$ intersects orthogonally the real axis at $\g(0)$.

In order to prove that $\kappa'$ only vanishes at $t_0$ we are going to suppose that $\kappa'(t_1)=0$ for some $t_1\neq 0$. With the same argument as before, $\g$ is symmetric with respect to the reflection of the normal line $r(t_1)$ to $\g$ at $t_1$. In this situation we have three options:
\begin{itemize}
\item $r(t_1)$ is the real axis. In this situation, $\g$ is orthogonal to the real axis at $t_1$, therefore, using the symmetry of $\g$ with respect to the real axis, $\g$ is a closed curve. This contradicts item 3.
\item $r(t_1)$ is parallel to (and different from) the real axis. This implies that $\g$ is symmetric with respect to reflections through two distinct parallel lines, therefore $\g$ is invariant by a non trivial vertical translation obtained by applying both reflections. This implies that $\kappa$ is periodic, which contradicts item 3. 
\item $r(t_1)$ is not parallel to the real axis. Therefore, $\g$ is symmetric with respect to both the real axis and $r(t_1)$, which implies that $\g$ is symmetric with respect to the composition of both reflections which is a rotation of angle $\alpha\neq 0,\pi$. This implies that $\kappa$ is periodic, which contradicts item 3.
\end{itemize}
Thus, $\kappa'$ only vanishes at $t_0=0$. Since we are supposing $\kappa>0$, we have that $\kappa$ reaches its global maximum at $t=0$.

Since, because of item 4, $(|\g|^2)'=2\langle \g,\g'\rangle$ only vanishes at the zeros of $\kappa'$, this implies that the distance to the origin restricted to $\g$ has a unique critical point at $t=0$. This critical point is the global minimum of $|\g|$, because $\g$ is not bounded. 

Now we study the behaviour of each symmetric half $\g^+=\g(t_0,\infty)$, $\g^-=S(\g^+)$ of $\g$ that starts at the point $\g(0)$. In this way, $\g^+$ is a branch of $\g$ which is orthogonal to the real axis at its initial point $t_0=0$. Up to a rotation of angle $\pi$ around $\g(0)$, we can suppose that $y(t)>0$ for $t\in (0,\varepsilon)$ and for a small enough $\varepsilon>0$, which means that $\g^+$ starts lying in the upper half plane; which is equivalent to write $y'(0)=1$. Since $\kappa>0$, $\g^+$ starts being at the left-hand side of the vertical line $\{ x=x(0)\}$. 

In the case we suppose that the height function $\langle \g(t),\g'(0)\rangle =y(t)$ over the real axis has no critical points then $\g^+$ is a convex global graph over its projection $(-\infty ,x(0))$ over the real axis (this graph may be unbounded). Therefore, the unit normal $J\g'=n=(n_1,n_2)$ along $\g^+$ is in one of the semicircumferences defined by $\esf^1\setminus \{ n_2=0\}$. Notice that $\g$ is a global graph with respect to the straight line which is tangent to $\g$ at $\g(0)$ (because $y(t)$ has no critical points). In particular, $\g$ is embedded. This implies that the surface $\Sigma\subset \R^2\rtimes_A\R$ generated by $\g$ through~\eqref{7} is a complete embedded simply-connected minimal surface in $\R^3$ with its standard metric and, therefore, $\Sigma$ is a plane or an helicoid, which is a contradiction.

In conclusion, the height function $y(t)$ over the real axis has at least one critical point at $t_1\in (0,\infty)$. This implies that the tangent line to $\g$ at $t_1$ is horizontal, and since $\kappa(t_1)>0$, $y(t)$ has a local maximum at $t_1$. Using that $\kappa$ can not change sign, we deduce that $\g^+$ has to intersect the real axis at a new point $\g(t_2)$ with $t_2>t_1$. In this situation, $\g'(t_2)$ can not be orthogonal to the real axis (if it were, we would conclude that $\g$ is a closed curve which is a contradiction with item 3). Therefore, $\g(t_2)$ is a point of self-intersection at which $\g^+$ and $\g^-$ intersect transversely as shown in Figure~\ref{fig2}.
\begin{figure}[h]
\centering
\includegraphics[scale=0.7]{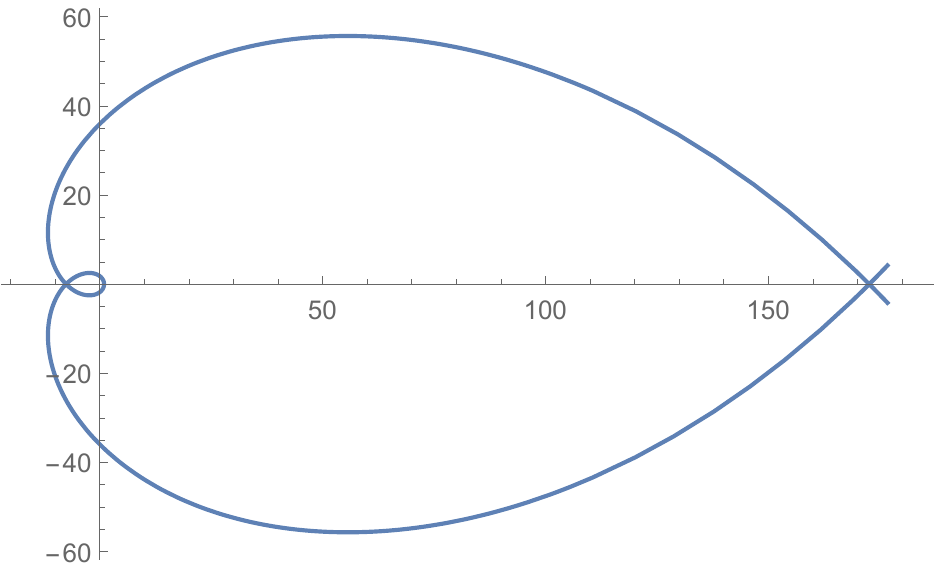}
\caption{The numerical solution $\g$ of~\eqref{se4} 
with the initial conditions $\g(0)=(1,0)$, $\g'(0)=(0,1)$. It shows $\g$ for the values $t\in [-250,250]$. For this interval we found two self-intersecting points.}
\label{fig2}
\end{figure}

Since $\g$ is symmetric with respect to the real axis and $t_0=0$, we have that $x(t)$ is an even function (respectively $y(t)$ is an odd function). Therefore, $x'(t)$ is odd (respectively $y'(t)$ is even), and we deduce that the support function $\xi=\langle \g,J\g'\rangle$ is even. Since $\kappa$ is positive, from~\eqref{se4} we have that $\xi$ is negative. Moreover, 
\[
\xi'=(\langle \g,J\g'\rangle )'=\langle \g,(J\g')'\rangle=-\langle \g,\g'\rangle\kappa =
(1+|\g|^2)\kappa',
\]
where, for the last equality, item 4 is needed. Since $\kappa$ is strictly increasing
on $(-\infty,0)$ (respectively is strictly decreasing on $(0,\infty)$), we have that $\kappa'>0$ on $(-\infty,0)$ (respectively $\kappa'<0$ on $(0,\infty)$), therefore $\xi'>0$ on $(-\infty,0)$ (respectively $\xi'<0$ on $(0,\infty)$), which means that $\xi $ is strictly increasing on $(-\infty,0)$ and strictly decreasing on $(0,\infty)$, and $\xi$ has a unique critical point which is a negative global maximum at $t=0$. 

Next, we prove that the map $t\mapsto |\g(t)|$ is inyective on $(0,\infty)$ (the same argument works on $(-\infty,0)$). If there exists $t_1,t_2$ such that $0<t_1<t_2$ and $|\g(t_1)|=|\g(t_2)|$, this implies the existence of $t_3\in (t_1,t_2)$ such that $\xi(t_3)=|\g(t_3)|$. Since $\xi=\left\langle \g,J\g\right\rangle\leq |\g|$, then $\left\langle \g(t_3),\g(t_3)\right\rangle=0$. Using equation \eqref{inyect} we get that $\kappa'(t_3)=0$ which contradicts the fact that $\kappa'$ only vanishes at 0.

\begin{figure}[h]
    \centering
    \includegraphics[scale=0.5]{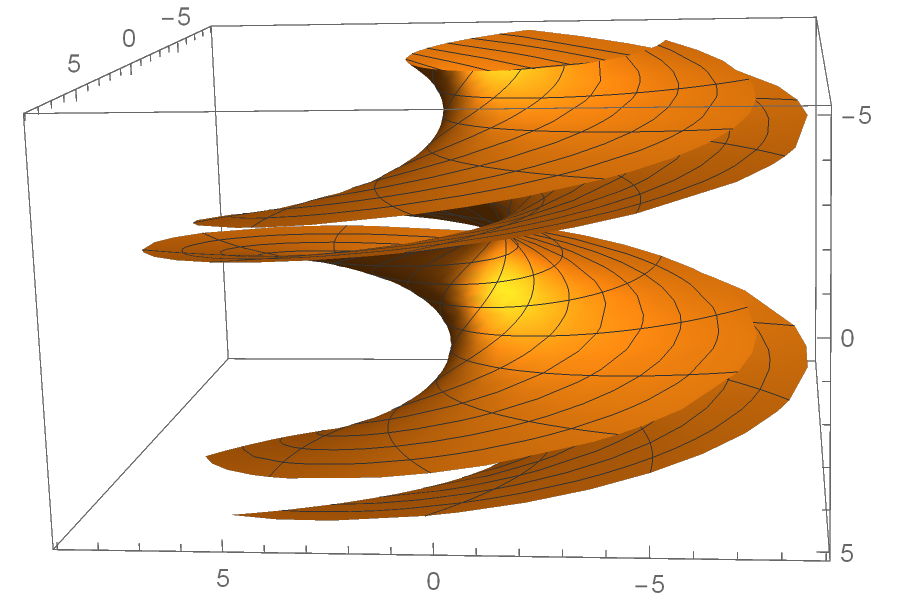}
    \caption{Vertically invariant minimal surface in $\widetilde{E}(2)$ (of type II) generated through $\Phi(t,s)=\phi_s(\gamma(t))$, with $\gamma(t)=(x(t),y(t),0)$, where $x,y$ are solutions of the system \eqref{se2} with initial conditions $x(0)=1$, $y(0)=0$, $\theta(0)=1$.}
    \label{fig3}
\end{figure}

We still have to prove that the self-intersection points of $\g$ are all in the real axis and they occur for opposite values of the parameter of $\g$. If $\g(t_1)=\g(t_2)$ but $t_1\neq t_2$, then $t_1,t_2$ can not have the same sign (because the map $t\mapsto |\g(t)|$ is inyective on $(0,\infty)$ and on $(-\infty,0)$), therefore we can suppose $t_1<0<t_2$. Since $|\g(t_1)|=|\g(t_2)|=|\g(-t_2)|$ (because $|\g|$ is even) then $t_1=-t_2$, and therefore $y(t_2)=y(t_1)=y(-t_2)=-y(t_2)$ which implies $y(t_2)=0$).
\end{proof}

\subsection{Vertically invariant minimal surfaces in $\widetilde{E}(2)$ with a non-flat metric}
We consider $\widetilde{E}(2)$ with the metric that makes it isometric and isomorphic to $\R^2\rtimes_{A(c)}\R$ with 
\[
A(c)=\left(\begin{array}{cc}
	0 & -c \\
	1/c & 0
\end{array}\right)\]
with $c\in (1,\infty)$. If we impose $H=0$ in equation \eqref{26} we get the following system:
\begin{equation}\label{sistemau}
\left\lbrace\begin{array}{l}
x'=\cos \theta\\
y'=\sin\theta\\
\t'=\frac{\left[1+c^2+\left(1-c^2\right) \cos (2 \theta )\right] \left(x \sin\theta -c^2 y \cos\theta \right)}{2 \left(c^2+x^2+c^4 y^2\right)}
\end{array}.\right.
\end{equation}
\begin{proposition}\label{lema7}
If $\g$ verifies \eqref{sistemau} then the maximal interval of definition of $\g$ is $\R$.
\end{proposition}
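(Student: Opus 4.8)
The plan is to show that the vector field defining \eqref{sistemau} is bounded along solutions and then invoke the standard continuation theorem for ODEs, exactly as in the argument indicated for \eqref{sistema24} in the remark following Proposition~\ref{prop2}.

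First I would note that the denominator $c^2+x^2+c^4y^2$ is bounded below by $c^2>0$, so the right-hand side of \eqref{sistemau} is a smooth vector field on $\R^2\times\esf^1$, which guarantees local existence and uniqueness of solutions. The first two equations give $|x'|\le 1$ and $|y'|\le 1$ directly, so $x$ and $y$ grow at most linearly and cannot escape to infinity in finite time; since $\t$ takes values in the compact set $\esf^1$, no component of $\g$ can blow up in finite time.

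If one prefers to work with $\t\in\R$, the same conclusion follows from a bound on $\t'$: since $c>1$ we have $1-c^2<0$, so the prefactor satisfies $2\le 1+c^2+(1-c^2)\cos(2\t)\le 2c^2$, and $|x\sin\t-c^2y\cos\t|\le |x|+c^2|y|$; hence
\[
|\t'|\ \le\ \frac{c^2|x|+c^4|y|}{c^2+x^2+c^4y^2}\ \le\ \frac{c^2|x|}{c^2+x^2}+\frac{c^2|y|}{1+c^2y^2}\ \le\ \frac c2+\frac c2\ =\ c,
\]
using the AM--GM inequalities $c^2+x^2\ge 2c|x|$ and $1+c^2y^2\ge 2c|y|$ in the last step.

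To finish, on any finite interval $[0,T)$ the solution stays inside the compact set $\{\,|x-x(0)|\le T,\ |y-y(0)|\le T\,\}\times\esf^1$, on which the defining vector field is Lipschitz; the escape lemma (equivalently, Picard--Lindel\"of reapplied at the right endpoint) then rules out finite-time blow-up, and the argument for $t<0$ is identical. Hence the maximal interval of definition of $\g$ is $\R$. There is essentially no obstacle here; the only step that takes a moment of care is the estimate for $\t'$, which reduces to observing that the trigonometric prefactor is bounded because $c>1$ and then applying AM--GM twice.
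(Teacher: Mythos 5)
Your proof is correct and follows essentially the same route as the paper: bound the right-hand side of \eqref{sistemau} and invoke Picard--Lindel\"of to rule out finite-time blow-up. The only cosmetic difference is that the paper bounds $|\t'|$ by $c/2$ via the Cauchy--Schwarz inequality $|x\sin\t-c^2y\cos\t|\le\sqrt{x^2+c^4y^2}$ followed by one AM--GM step, whereas you use the triangle inequality and two AM--GM steps to get the (equally sufficient) bound $|\t'|\le c$.
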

\begin{proof}
    The function $a\in [-1,1]\mapsto \frac{1}{2}[1+c^2+(1-c^2)a]$ is nonincreasing because $c\geq 1$, therefore its maximum is $\frac{1}{2}[1+c^2-(1-c^2)]=c^2$. Moreover, using Schwarz inequality, 
    \begin{equation}\label{39}
    \left|\frac{x\sin\t -c^2y\cos\t}{c^2+x^2+c^4y^2}\right|\leq \frac{\sqrt{x^2+c^4 y^2}}{c^2+x^2+c^4 y^2}\leq \frac{1}{2c}.
    \end{equation}
    This proves that the derivative of $\t$ is bounded. Again, \eqref{sistemau} can be seen as an ODE of the form $X'=F(t,X)$, where $F$ is bounded and the result follows from the Picard-Lindelöf Theorem.
\end{proof}

Viewing \eqref{sistemau} as an ODE of the form $X'=F(t,X,c)$ for all $c\in [1,\infty)$, since for each $c\in[1,\infty)$ the maximal interval of definition of the solution $\gamma_c$ of \eqref{sistemau} (once we have fixed the initial conditions) is $\R$, and the map $F:\R\times \R^3\times [1,\infty)\rightarrow \R^3$, $(t,X,c)\mapsto F(t,X,c)$ is locally Lipschitz (in fact is differentiable), we can use the continuous dependence on the parameter to conclude that, once we fix the initial conditions, for every $\varepsilon>0$ and for every interval $I\subset \R$ there exists $\delta>0$ such that if $1\leq c<1+\delta$ then $|\gamma(t)-\gamma_c(t)|<\varepsilon$ for every $t\in I$, where $\gamma$ is the solution of \eqref{sistemau} with $c=1$ which have been described in Subsections \ref{sub61} and \ref{sub62}.

Next result describes further properties of the generating curve $\gamma$ in $\widetilde{E}(2)$ with a non-flat metric.

\begin{proposition}
    \begin{enumerate}
        \item If $c>1$ then the x-axis and the y-axis are the only solution of \eqref{sistemau} with constant $\theta$.
        \item The curvature $\kappa=\theta'$ goes to $0$ when $t\to \pm \infty$ and $|\kappa(t)|\leq c/2$, $\forall t\in \R$.
    \end{enumerate}
\end{proposition}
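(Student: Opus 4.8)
For item 1, the plan is to set $\theta'=0$ in the third equation of~\eqref{sistemau}, which forces the numerator $[1+c^2+(1-c^2)\cos(2\theta)](x\sin\theta-c^2y\cos\theta)$ to vanish identically along the solution. Since $c>1$, the bracket $1+c^2+(1-c^2)\cos(2\theta)$ is strictly positive for every value of $\theta$ (its minimum over $\cos(2\theta)\in[-1,1]$ is $c^2>0$, exactly as computed in the proof of Proposition~\ref{lema7}), so it never vanishes. Hence we must have $x\sin\theta-c^2y\cos\theta\equiv 0$. Differentiating this relation and using $x'=\cos\theta$, $y'=\sin\theta$, $\theta'=0$ yields $\cos\theta\sin\theta-c^2\sin\theta\cos\theta=(1-c^2)\sin\theta\cos\theta\equiv 0$; since $c>1$ this forces $\sin\theta\cos\theta\equiv 0$, i.e. $\theta\in\{k\pi/2\}$. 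If $\theta\equiv 0$ then $y$ is constant and $x\sin\theta-c^2y\cos\theta=-c^2y\equiv 0$ forces $y\equiv 0$; the curve is the $x$-axis. If $\theta\equiv\pi/2$ then $x$ is constant and the relation gives $x\equiv 0$; the curve is the $y$-axis. The cases $\theta\equiv\pi,3\pi/2$ give the same two axes up to orientation. Conversely, it is immediate that $(t,0,0)$ and $(0,t,\pi/2)$ solve~\eqref{sistemau}.

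For item 2, I would first note that the curvature of $\gamma$ with respect to the flat metric of $\{z=0\}$ is $\kappa=\theta'$ (since $\gamma$ is arc length parameterized and $\gamma''=\theta'(-\sin\theta,\cos\theta)$), and then bound $|\kappa|$ directly from the third equation of~\eqref{sistemau}: combining the two estimates from the proof of Proposition~\ref{lema7} — that the bracket is at most $2c^2$ and that $|x\sin\theta-c^2y\cos\theta|/(c^2+x^2+c^4y^2)\le 1/(2c)$ by Schwarz — gives $|\kappa|\le\frac{1}{2}\cdot 2c^2\cdot\frac{1}{2c}=c/2$, which is the claimed pointwise bound.

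The decay $\kappa(t)\to 0$ as $t\to\pm\infty$ is the substantive point. The plan is to mimic the flat case (item 3 of Theorem~\ref{lema5}): show $|\gamma(t)|\to\infty$ as $|t|\to\infty$, and then read off $\kappa\to 0$ from the sharper estimate $|\kappa|\le\frac{1}{2}\cdot 2c^2\cdot\frac{\sqrt{x^2+c^4y^2}}{c^2+x^2+c^4y^2}\le\frac{c^2\sqrt{x^2+c^4y^2}}{c^2+x^2+c^4y^2}$, whose right-hand side tends to $0$ as $x^2+c^4y^2\to\infty$, i.e. as $|\gamma|\to\infty$ (note $|\gamma|^2=x^2+y^2\to\infty$ is equivalent to $x^2+c^4y^2\to\infty$). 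To see $|\gamma|$ is unbounded, suppose not; then $\gamma$ stays in a disc $\mathbb{D}(R)$ and the vertically invariant surface $\Sigma$ it generates via~\eqref{7} lies in the solid cylinder $\mathbb{D}(R)\times\R\subset\widetilde{E}(2)$. Since $\widetilde{E}(2)$ with this metric is isometric to flat $\R^3$, one applies the maximum principle at infinity (as in the proof of Theorem~\ref{lema5}, citing \cite{MeeksRosenberg}) with $\Sigma$ and a vertical plane disjoint from the cylinder to reach a contradiction. The main obstacle is making the cylinder argument rigorous in the non-flat coordinates — one must check that the orbits $\phi_s$ of the vertical flow still sweep $\gamma$ into a set contained in a genuine solid cylinder of $\R^3$ under the isometry with flat $\R^3$; since the flow~\eqref{7} here is again a screw motion (rotation by angle depending on $c$ composed with vertical translation), boundedness of $\gamma$ in $\R^2$ does translate to boundedness of $\Sigma$ in the horizontal directions, so the argument goes through as in the flat case.
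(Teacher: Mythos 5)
Your item 1 is correct and essentially the paper's argument: both reduce to $x\sin\theta_0-c^2y\cos\theta_0\equiv 0$ after observing that the bracket $1+c^2+(1-c^2)\cos(2\theta)$ never vanishes, and then extract $(1-c^2)\sin\theta_0\cos\theta_0=0$ (the paper by parametrizing the resulting line through the origin as $tv$, you by differentiating the constraint; both work). One small slip: the minimum of the bracket over $\cos(2\theta)\in[-1,1]$ is $2$ (attained at $\cos(2\theta)=1$), not $c^2$; what Proposition~\ref{lema7} computes is that the \emph{maximum} of $\tfrac12[1+c^2+(1-c^2)a]$ is $c^2$. This does not affect your argument, since all you need is that the bracket is positive. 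Your pointwise bound $|\kappa|\le \tfrac12\cdot 2c^2\cdot\tfrac{1}{2c}=c/2$ is exactly the paper's.

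The genuine gap is in the decay $\kappa(t)\to 0$. You assert that ``$\widetilde{E}(2)$ with this metric is isometric to flat $\R^3$'' and then invoke the Meeks--Rosenberg maximum principle at infinity as in Theorem~\ref{lema5}. That is false for $c>1$: only the case $c=1$ gives the flat metric, and this subsection is precisely about the \emph{non-flat} left-invariant metrics on $\widetilde{E}(2)$. The paper explicitly flags that the flat-case argument does not transfer, and replaces it as follows: vertical planes are minimal in \emph{any} $\R^2\rtimes_A\R$ (Remark 2.10 in \cite{MeeksPerez}), so if $\gamma$ were bounded one takes a vertical plane disjoint from the cylinder containing $\Sigma$ and left-translates it toward $\Sigma$ until a first contact; if that first contact only occurs ``at infinity,'' the invariance of $\Sigma$ under the full vertical flow --- whose time-$2\pi$ map is a genuine vertical translation, making $\Sigma$ periodic in the vertical direction --- produces a finite interior contact point, and the \emph{classical} maximum principle (not the one at infinity) yields the contradiction. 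Your closing remark that ``the argument goes through as in the flat case'' because the flow is a screw motion does not repair this: the obstacle is not the shape of the orbits but that $\Sigma$ is not a minimal surface of Euclidean $\R^3$, so the comparison surface and the maximum principle you want to use are simply not available. Once unboundedness of $\gamma$ is established by the correct route, your final estimate $|\kappa|\le c^2\sqrt{x^2+c^4y^2}/(c^2+x^2+c^4y^2)\to 0$ is exactly the paper's conclusion.
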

\begin{proof}
    To prove item 1, notice that solving the equation $1+c^2+(1-c^2)C=0$ gives $C=\frac{c^2+1}{c^2-1}$ which is strictly greater than 1. Therefore the factor $1+c^2+(1-c^2)\cos(2\theta)$ can not be 0. Thus, a solution of \eqref{sistemau} with constant $\theta=\theta_0\in \R$ must verify
    \[
    x \sin\theta_0-c^2 y \cos\theta_0\equiv 0,
    \]
    which means that the generating curve $\gamma$ is contained in a straight line passing through the origin. Write $\gamma$ as $\gamma(t)=t v$, where $v=(v_1,v_2)\in \mathbb{S}^1$, for all $t\in \R $ and therefore we have $t v_1\sin\theta_0-c^2 tv_2\cos\theta_0=t(-1+c^2)v_1v_2=0$ for all $t\in \R$. As we are assuming $c>1$, then $v_1 v_2=0$.

    To prove item 2, we know that $|\kappa(t)|\leq c/2$, $\forall t\in \R$ because of the proof of Proposition~\ref{lema7}. To see that $\kappa=\theta'$ goes to $0$ when $t\to \pm \infty$, the same proof of Theorem~\ref{lema7} does not work here. However, vertical planes are still minimal surfaces in $\R^2\rtimes_A\R$, for any $A\in \mathcal{M}_2(\R)$ (see Remark 2.10 in \cite{MeeksPerez}). Suppose that $\gamma$ is bounded, then the surface $\Sigma\subset\widetilde{E}(2)$ generated by $\gamma$ from \eqref{7} would be inside a vertical cylinder $\mathbb{D}(R)\times\R\subset\R^3$, $R>0$. We can take a vertical plane disjoint from $\Sigma$ and left translate it until we find a first contact point. If the first contact point is at infinity (see Remark \ref{r9}) then there is also a first finite contact point. This comes from the fact that the vertical translation of a point $(x,y,z)\in\widetilde{E}(2)$ describes a helix $s\mapsto \left(x\cos s-cy\sin s,\frac{x\sin s}{c}+y\cos s,z+s\right)$. Therefore the surface $\Sigma$ is periodic along the vertical axis, which implies that there are infinitely many finite contact points when we find a first contact point. Thus, we can apply the classical maximum principle to reach a contradiction and therefore $|\gamma(t)|\to \infty$ when $t\to \pm\infty$. Using \eqref{39} we have
    \[
    \kappa\leq c^2 F\left(\sqrt{x^2+c^4y^2}\right),
    \]
    where $F(b)=\frac{b}{c^2+b^2}$. Since $\sqrt{x^2+c^4y^2}\to \infty$ if $|\gamma|\to \infty$ and $F(b)$ goes to $0$ when $b\to\infty$ we have that $k(t)\to 0$ if $|t|\to\infty$.
\end{proof}

\begin{remark}\label{r9}
    It is still not proved a maximum principle at infinity result or a half-space theorem for vertical planes in $\widetilde{E}(2)$. However a half-space theorem for horizontal planes was proved in {\rm\cite{zang}}.
\end{remark}

\section{Surfaces with zero Gaussian curvature}\label{sec7}
Since in Section \ref{sec3} we have computed the coefficients of the first and second fundamental forms, it is natural to compute the Gaussian curvature of a surface generated by \eqref{7} and study the vertically invariant surfaces on $\R^2\rtimes_A\R$ with constant or zero Gaussian curvature. 

Again, we assume that our metric Lie group $G=\R^2\rtimes_A\R$, $A\in \mathcal{M}_2(\R)$ is unimodular. In the literature, invariant surfaces of zero Gaussian curvature in metric Lie groups that can be written as a semidirect product has been studied in the following ambient spaces:
\begin{enumerate}
    \item In $\R^3$, Hartman and Nirenberg proved that (Theorem III in \cite{hartnir}) all the complete, zero Gaussian curvature surfaces are given by right cylinders over possibly non compact curves in $\R^2\times\{0\}$, which describes all the possible examples.
    
    \item The invariant surfaces with constant extrinsic Gaussian curvature in $\text{Nil}_3$ were studied by Belarbi in \cite{belarbi}, where he considered all possible 1-parameter subgroups of left translations.
    
    \item Invariant surfaces with respect to any 1-parameter subgroup of constant intrinsic or extrinsic Gaussian curvature in $\text{Sol}_3$ with its standard metric, $A=\left(\begin{array}{cc}
        1 & 0 \\
        0 & -1
    \end{array}\right)$, were studied in López \cite{loraf1} and López-Munteanu \cite{loraf2}.
\end{enumerate} 

When we consider $\widetilde{E}(2)$ with its standard (flat) metric, we know, by Theorem III in \cite{hartnir} that except for cylinders over a circumference centered at the origin, there are no complete zero Gaussian curvature invariant surfaces because vertical translations in $\widetilde{E}(2)$ do not produce right cylinders except for these vertical cylinders over circumferences. Even though, we can still give a description of the vertically invariant surfaces in $\widetilde{E}(2)$ with zero Gaussian curvature (since the metric is flat we are dealing with both, intrinsic and extrinsic, cases). None of these surfaces can be complete except for the vertical cylinders over circumferences centered at the origin.

Using the expressions for the first and second fundamental forms of Section \ref{sec3}, and the same notation as in previous sections, since the Gaussian curvature $K$ can be computed as
\[
K=\frac{eg-f^2}{EG-F^2},
\]
we get the following system of ODEs for a curve $\gamma=(x,y,0)$ that generates, through vertical translations, a constant Gaussian curvature surface in the standard $\widetilde{E}(2)$:
\begin{equation}
\left\lbrace\begin{array}{l}
x'=\cos \theta\\
y'=\sin\theta\\
K=-\frac{1+(y\cos \theta -x\sin \theta)\theta'}{(1+(x \cos\theta+ y \sin\theta))^2}
\end{array}.\right.
\label{sistemagauss}
\end{equation}
If we impose $K=0$ this system becomes:
\begin{equation}
\left\lbrace\begin{array}{l}
x'=\cos \theta\\
y'=\sin\theta\\
(y\cos \theta -x\sin \theta)\theta'=-1
\end{array}.\right.
\label{sistemagauss2}
\end{equation}
As in Section \ref{sec5}, we can give a first integral for the system \eqref{sistemagauss2}:
\[
J(t)=x(t)\cos\theta(t)+y(t)\sin\theta(t)=:J\in \R.
\]
$J$ can be written as
\[
J(t)=\frac{1}{2}\frac{d}{dt}\left(x(t)^2+y(t)^2\right).
\]
Integrating and assuming $x(0)=x_0$, $y(0)=y_0$, $\theta(0)=\theta_0$, $x_0,y_0,\theta_0\in \R$ gives that for any $t$ in the maximal interval of definition of $\gamma$ we have
\begin{equation}\label{modulo}
|\gamma(t)|^2=x(t)^2+y(t)^2=a t+x_0^2+y_0^2,
\end{equation}
where $a:=2J=2(x_0\cos\theta_0+y_0 \sin\theta_0)$. We know that if $a\not=0$ there are no solutions of \eqref{sistemagauss2} defined for all $t\in \R$. \eqref{modulo} implies that the maximal interval of definition of $\gamma$ is contained in $\left[-(x_0^2+y_0^2)/a,\infty\right)$ if $a>0$ or in $\left(-\infty,-(x_0^2+y_0^2)/a\right]$ if $a<0$. Now write $\gamma$ in polar coordinates:
\[
x(t)=r(t)\cos\alpha(t), \quad
y(t)=r(t)\sin \alpha(t).
\]
Due to \eqref{modulo} we know that 
\begin{equation}\label{rt}
r(t)=\sqrt{a t +x_0^2+y_0^2}.
\end{equation}
Since 
\begin{align*}
    x'&=r'\cos\alpha-r\alpha'\sin\alpha,\\
    y'&=r'\sin \alpha+r\alpha'\cos\alpha,
\end{align*}
equation $1=(x')^2+(y')^2$ becomes
\[
1=(r')^2+r^2(\alpha')^2,
\]
which implies
\[
\alpha'(t)=\frac{1}{2} \frac{\sqrt{-a^2+4 a t+4 \left(x_0^2+y_0^2\right)}}{\left|a t+x_0^2+y_0^2\right|}.
\]
The maximal interval of definition of $\alpha'$ (and therefore the maximal interval of definition of $\alpha$) is $I_1:=\left[a/4-(x_0^2+y_0^2)/a,\infty\right)$ if $a>0$, $I_2:=\left(-\infty,a/4-(x_0^2+y_0^2)/a\right]$ if $a<0$ and $\R$ if $a=0$. Since $I_1\subset\left[-(x_0^2+y_0^2)/a,\infty\right)$ when $a>0$ (resp. $I_2\subset\left(-\infty,-(x_0^2+y_0^2)/a\right]$) then the maximal interval of definition of a solution of \eqref{sistemagauss2} is given by $I_1$ if $a>0$, $I_2$ if $a<0$, or $\R$ if $a=0$.
 \begin{figure}[h]
\centering
\begin{subfigure}{.5\textwidth}
\centering
\includegraphics[width=.8\linewidth]{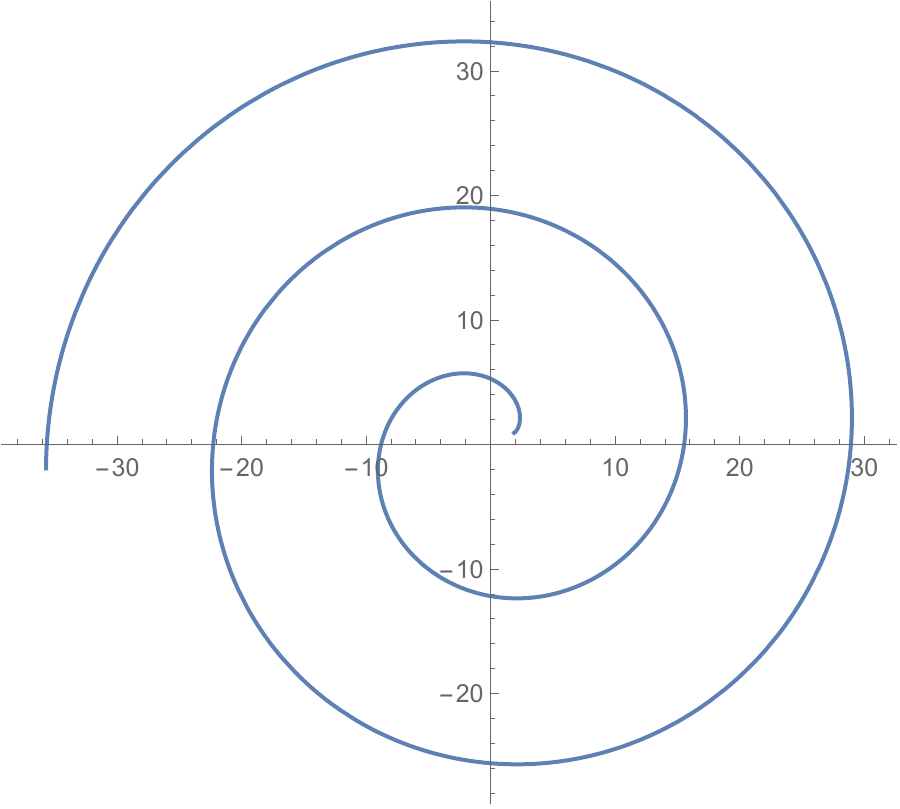}
\label{fig:sub1}
\end{subfigure}%
\begin{subfigure}{.5\textwidth}
\centering
\includegraphics[width=.8\linewidth]{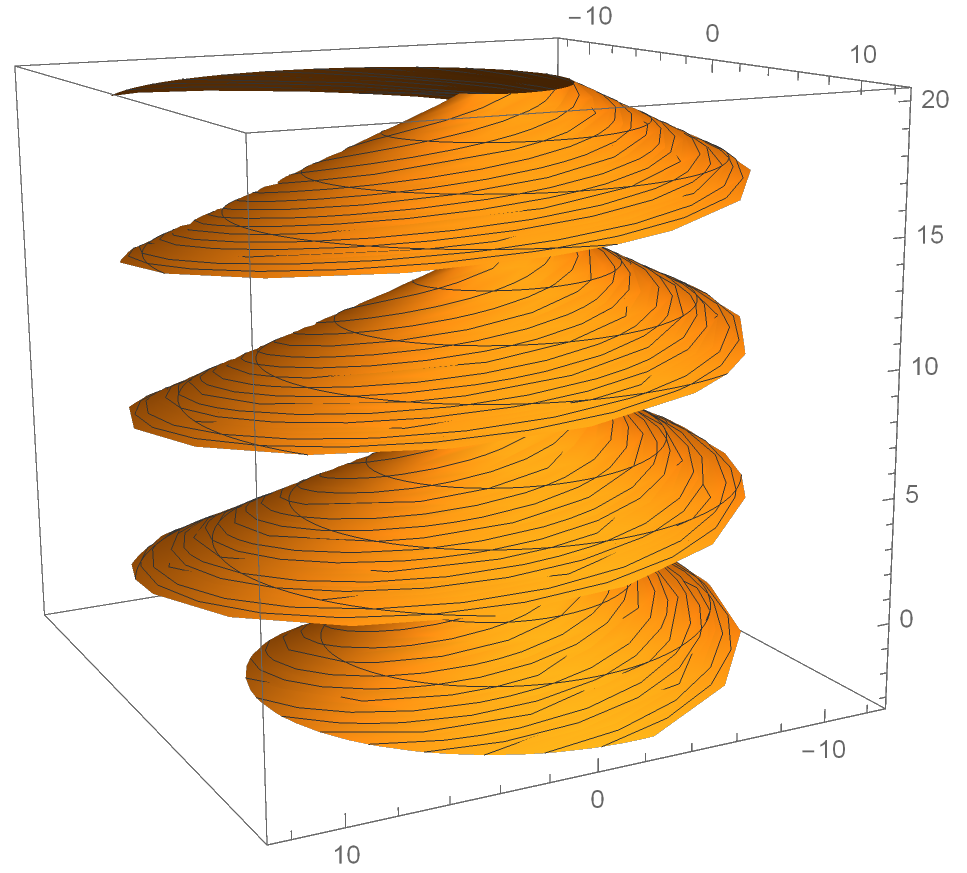}
\label{fig:sub2}
\end{subfigure}
\caption{Left: solution $\gamma$ of \eqref{sistemagauss2} with initial conditions $x_0=2$, $y_0=1$, $\theta_0=\frac{\pi}{4}$ ($a>0$). Right: zero Gaussian curvature surface generated through \eqref{7} with this $\gamma$.}
\end{figure}

Set 
\[
g(t)=\sqrt{-a^2+4at+4(x_0^2+y_0^2)},
\]
then, integrating, we finally describe the function $\alpha$ and, therefore, all the invariant surfaces in $\widetilde{E}(2)$ with zero Gaussian curvature:
\begin{equation}
\begin{split}\label{at}
    \alpha(t)=&\arctan\left(\frac{y_0}{x_0}\right)+\frac{1}{2}\int_0^t \frac{\sqrt{-a^2+4 a \tau+4 \left(x_0^2+y_0^2\right)}}{\left|a \tau+x_0^2+y_0^2\right|}d\tau\\
    =&\arctan\left(\frac{y_0}{x_0}\right)+\left.\frac{\text{sgn}\left(a\tau+x_0^2+y_0^2\right)}{2a}\left(g(\tau)-a\;\text{arccot}\left(\frac{a}{g(\tau)}\right)\right)\right|_{\tau=0}^{\tau=t}.
\end{split}
\end{equation}

In summary, we have proved the following result:
\begin{proposition}
    Let $\Sigma\subset \widetilde{E}(2)$ be a vertically invariant surface with zero Gaussian curvature, then either $\Sigma$ is a vertical cylinder over a circumference in the plane $\left\lbrace z=0\right\rbrace$ centered at the origin (and this is the only complete example) or the generating curve $\gamma$ of $\Sigma$ is given in polar coordinates by \eqref{rt} and \eqref{at}.
\end{proposition}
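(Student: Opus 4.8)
The plan is to integrate the system \eqref{sistemagauss2} by quadrature, using the first integral $J=x\cos\t+y\sin\t$, and then to isolate the complete examples by a case distinction on the constant $a=2J$.

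First I would verify that $J$ is constant along any solution of \eqref{sistemagauss2}: differentiating $J(t)=x\cos\t+y\sin\t$ and substituting $x'=\cos\t$, $y'=\sin\t$ and $(y\cos\t-x\sin\t)\t'=-1$ gives $J'=\cos^2\t+\sin^2\t+(-x\sin\t+y\cos\t)\t'=1-1=0$. Since $J=\tfrac12(x^2+y^2)'$, integration yields $|\g(t)|^2=at+x_0^2+y_0^2$, which is exactly \eqref{modulo}. Writing $\g$ in polar coordinates $(r,\alpha)$ I read off $r(t)=\sqrt{at+x_0^2+y_0^2}$, i.e.\ \eqref{rt}, and then the arc length condition $1=(x')^2+(y')^2=(r')^2+r^2(\alpha')^2$ becomes an algebraic relation for $\alpha'$; since $r'=a/(2r)$ this gives $\alpha'(t)^2=(4r^2-a^2)/(4r^4)$, hence the formula for $\alpha'$ quoted just before \eqref{at}, and one more integration produces \eqref{at}. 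Along the way the two requirements that $r^2\ge 0$ and that the radicand $4r^2-a^2$ in $\alpha'$ be nonnegative determine the maximal interval of definition as $I_1$ (if $a>0$), $I_2$ (if $a<0$), or $\R$ (if $a=0$), which I would record since it is precisely what singles out the complete case; note also that $4r^2\ge a^2$ forces $r>0$ on the maximal interval when $a\ne0$, so the curve never meets the origin and polar coordinates are legitimate there.

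Then comes the case split. If $a=0$, then $r\equiv R:=|\g(0)|$ is constant; if $R=0$ the curve degenerates to a point and generates no surface, so $R>0$, $\alpha'\equiv 1/R$ is constant, and $\g$ parametrizes the full circle of radius $R$ centered at the origin. The surface $\Sigma$ generated through \eqref{7} is then the vertical cylinder over this circle, defined on all of $\R$ and closing up, hence complete. If $a\ne 0$, the maximal interval is the proper half-line $I_1$ or $I_2$, so the parametrization $\Phi$ of $\Sigma$ is only defined over a half-strip and $\Sigma$ cannot be complete; in this case $\g$ is exactly the curve given by \eqref{rt} and \eqref{at}. Finally, to see that the origin-centered vertical cylinders are the \emph{only} complete examples I would invoke Theorem III of \cite{hartnir}: a complete flat surface in $\R^3$ is a right cylinder over a (possibly noncompact) planar curve, and among vertically invariant surfaces of $\widetilde E(2)$ the only ones that are right cylinders are the vertical cylinders over circumferences centered at the origin, as already observed before the statement.

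The computations are routine; the only points needing a little care are the degenerate subcase $R=0$, the check that the half-line $I_1$ (resp.\ $I_2$) is genuinely maximal rather than merely the naive domain where $r^2>0$ — which is why the reality constraint $4r^2\ge a^2$ for $\alpha'$ must be tracked — and the bookkeeping of signs in the antiderivative, which is why the factor $\mathrm{sgn}(a\tau+x_0^2+y_0^2)$ (identically $+1$ on the domain of definition) appears in \eqref{at}. I do not expect any serious obstacle beyond this.
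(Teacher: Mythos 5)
Your proposal is correct and follows essentially the same route as the paper: the first integral $J=x\cos\t+y\sin\t$, integration of $|\g|^2$ to get \eqref{modulo} and \eqref{rt}, the arc-length relation in polar coordinates yielding $\alpha'$ and then \eqref{at}, the determination of the maximal interval from the radicand $4r^2-a^2$, and the appeal to Hartman--Nirenberg to isolate the origin-centered vertical cylinder as the only complete case. The only cosmetic remark is that in the subcase $a=0$, $R=0$ the cleaner observation is that the third equation of \eqref{sistemagauss2} reads $(y\cos\t-x\sin\t)\t'=-1$ and its left-hand side vanishes at the origin, so no solution passes through $(0,0)$ at all; this gives $R>0$ directly.
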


\bibliographystyle{plain}
\bibliography{main}

\Addresses

\end{document}